\documentclass[a4paper,oneside,10pt]{article}%
\usepackage{amsmath}
\usepackage{amsfonts}
\usepackage{amssymb}
\usepackage{graphicx}
\usepackage{enumerate}
\usepackage{color}
\usepackage[square,numbers,sort&compress]{natbib}%
\setcounter{MaxMatrixCols}{30}
\usepackage{hyperref}
\usepackage[amsmath,thmmarks]{ntheorem}
\hypersetup{hypertex=true,bookmarks=true,bookmarksnumbered	=true,linktocpage=false,colorlinks=true,linkcolor=blue,anchorcolor=blue,citecolor=red}
\usepackage{indentfirst} 
\usepackage[mathscr]{euscript}
\setlength{\parindent}{2em}
\providecommand{\U}[1]{\protect \rule{.1in}{.1in}}

\pagenumbering{arabic}
\setlength{\textwidth}{165mm}
\setlength{\textheight}{220mm}
\headsep=15pt \topmargin=-5mm \oddsidemargin=-0.36cm
\evensidemargin=-0.36cm \raggedbottom
\newtheorem{theorem}{Theorem}[section]

\newtheorem{corollary}[theorem]{Corollary}

\newtheorem{definition}[theorem]{Definition}

\newtheorem{lemma}[theorem]{Lemma}

\newtheorem{proposition}[theorem]{Proposition}
\newtheorem{remark}[theorem]{Remark}

\newenvironment{proof}[1][Proof]{\noindent \textbf{#1.} }{\  $\Box$}
\numberwithin{equation}{section}

\begin{document}

\makeatletter
\newcommand{\rmnum}[1]{\romannumeral #1}
\newcommand{\Rmnum}[1]{\expandafter\@slowromancap\romannumeral #1@}
\makeatother
{\theoremstyle{nonumberplain}
}

\title{Squared Bessel processes under nonlinear expectation}
\author{Mingshang Hu\textsuperscript{1} \and Renxing Li \textsuperscript{2} \and Xue Zhang\textsuperscript{3}}

\author{Mingshang Hu\textsuperscript{1}   
	\and Renxing Li\textsuperscript{2,}\thanks{Corresponding author. E-mail address: 202011963@mail.sdu.edu.cn}  
	\and Xue Zhang\textsuperscript{3} 
}

\footnotetext[1]{Zhongtai Securities Institute for Financial Studies,
	Shandong University, Jinan, Shandong 250100, PR China. humingshang@sdu.edu.cn.
	Research supported by the National Natural Science Foundation of China (No. 12326603, 11671231).}
\footnotetext[2]{Zhongtai Securities Institute for Financial Studies,
	Shandong University, Jinan, Shandong 250100, PR China. 202011963@mail.sdu.edu.cn.}
\footnotetext[3]{Department of Mathematics, National University of Defense Technology, Changsha, Hunan 410073, PR China.
	zhangxue\_1998@nudt.edu.cn.}

\maketitle

\textbf{Abstract}. In this paper, we define the squared $ G $-Bessel process as the square of the modulus of a class of $ G $-Brownian motions and establish that it is the unique solution to a stochastic differential equation. We then derive several path properties of the squared $ G $-Bessel process, which are more profound in the capacity sense. Furthermore, we provide upper and lower bounds for the Laplace transform of the squared $ G $-Bessel process. Finally, we prove that the time–space transformed squared $ G $-Bessel process is a $ G' $-CIR process. 

{\textbf{Key words}. } $G$-Brownian motion; squared Bessel process; path property; time transformation

\textbf{AMS subject classifications.} 60H10

\addcontentsline{toc}{section}{\hspace*{1.8em}Abstract}

\section{Introduction}
In classical probability theory, Bessel processes and some of their generalizations are widely used in financial mathematics. In practical research, the squared Bessel process is not only widely used as a tool for studying the Bessel process, but also serves as a key theoretical reference for analyzing the properties of stochastic processes such as Brownian motion and the Cox–Ingersoll–Ross (CIR) process. The theory and application of the squared Bessel process are described in \cite{G2003survey,jea2009math,brownian2016,countinuous2013,yor2001expon} and their references. 

The problem of uncertainty in volatility in financial markets is complicated to handle within the classical probability framework. To address this issue, Peng \cite{peng2004filtration,peng2007G,peng2008multi,peng2019nonlinear} introduced $ G $-expectation and $ G $-Brownian motion and established the corresponding stochastic calculus theory to incorporate volatility uncertainty into the model. In recent years, the $ G $-expectation theory has attracted widespread attention. For studies on the stochastic differential equations driven by $ G $-Brownian motion ($ G $-SDEs), see \cite{A2023path,gao2009pathwise,hu2021on,liny2013stochastic} and the references therein. A series of results on the L\'{e}vy martingale characterization of $ G $-Brownian motion can be found in \cite{hu2019levy,xu2009martingale,xu2010martingale}. Quasi-continuity for stopping times under G-expectation was established in \cite{song2011hitting,liu2020exit,song2021grad}. Wang and Zheng \cite{wang2018sample} investigated the sample path properties of one-dimensional $ G $-Brownian motion. Akhtari and Li \cite{A2023GCIR} investigated the existence and uniqueness of solutions, the strong Markov property, and other properties of the CIR process under the $ G $-expectation framework ($ G $-CIR process).

This paper aims to study squared Bessel processes under the $ G $-expectation framework (squared $ G $-Bessel processes). In \cite{hu2025gbes}, condition (A) (see Section \ref{sec2}) not only ensures that the quadratic variation process of the $ G $-Brownian motion exhibits desirable properties, but also guarantees that the $ G $-Bessel process is well-defined. Therefore, for similar reasons, we also impose condition (A) on $ G $ in this article. We define the squared $ G $-Bessel process as the square of the modulus of the $ G $-Brownian motion and derive some properties of the squared $ G $-Bessel process directly from the properties of $ G $-Brownian motion. Subsequently, we show that the squared $ G $-Bessel process is the unique non-negative solution to equation (\ref{besq}). 

In the probabilistic framework, the path properties of a stochastic process can be characterized via stopping times. For instance, consider a non-negative random process $ X $ and $ \tau_{1}=\inf\{t\geq0:X_{t}=1\} $. When $ \tau_{1}<\infty $ a.s. holds, it indicates that $ X $ can reach $ 1 $ within a finite time. Through $ \tau_{1}<\infty $ a.s., we can directly obtain $\lim_{t\uparrow\infty}\mathrm{P}(\{\tau_{1}>t\})=\mathrm{P}(\lim_{t\uparrow \infty}\{\tau_{1}>t\})=\mathrm{P}(\{\tau_{1}=\infty\})=0 $. Within the $ G $-expectation framework, we describe the possibility of an event via its capacity. Nevertheless, the equation given above fails to hold in the capacity sense. $ \lim_{t\uparrow\infty}\mathrm{c}(\{\tau_{1}>t\})=0 $ cannot be obtained from $ \mathrm{c}(\{\tau_{1}=\infty\})=0 $ since the capacity does not satisfy upper continuity. But through monotonicity, we can get $ \mathrm{c}(\{\tau_{1}=\infty\})=0 $ from $ \lim_{t\uparrow\infty}\mathrm{c}(\{\tau_{1}>t\})=0 $. This means that  $ \lim_{t\uparrow\infty}\mathrm{c}(\{\tau_{1}>t\})=0 $ is more profound in the capacity sense.

In the study of path properties of the squared $ G $-Bessel process, we need to use the tool of stopping times. To ensure that the stopping time is well-defined within the $ G $-expectation framework, we verified the quasi-continuity of the stopping time. Using a proof by contradiction, we derive the path property of the squared $ G $-Bessel process away from the origin. By introducing stopping times truncation, and constructing two symmetric martingales that take values $ 0 $ or $ 1 $ at the truncation point (see (\ref{phiz}) and (\ref{varphiz})), we obtain the path properties of the square $ G $-Bessel process close to the origin. It is worth pointing out that these results are more profound in the capacity sense. Based on these properties, we can derive an expression for the $ G $-Bessel process in an alternative way, thereby improving the result in \cite{hu2025gbes}.

In addition, we provide upper and lower bounds on the Laplace transform of the squared $ G $-Bessel process. This estimate is different from the classical case but includes it. Finally, by means of the L\'{e}vy martingale characterization of the $ G $-Brownian motion, we obtain the deterministic time transformation formula for one-dimensional $ G' $-Brownian motion ($ G' $ is defined in condition (A)). Using this formulation, we show that the space-time transformed squared $ G $-Bessel process is a one-dimensional $ G' $-CIR process.

This paper is organized as follows. In Section \ref{sec2}, we introduce some fundamental notations and results about $ G $-expectation theory. Section \ref{sec3} presents the definition and path properties of the squared $ G $-Bessel process. Estimates for the Laplace transform of the squared $ G $-Bessel process are provided in Section \ref{sec4}. Finally, we presented the correspondence between the squared $ G $-Bessel process and the $ G' $-CIR process in Section \ref{sec5}.

\section{Preliminaries}\label{sec2}
In this section, we recall the some basic notions and some necessary results of the $G$-expectation framework. The readers may refer to \cite{peng2004filtration,hu2016quasi,hu2019levy,hu2021on,peng2007G,peng2008multi,peng2019nonlinear,denis2011function} for more details.

Let $\Omega =C_{0}^{d}(\mathbb{R}^{+})$ denote the space of all continuous functions $\omega: \mathbb{R}^{+}\rightarrow \mathbb{R}^{d}$ with $\omega _{0}=0$, equipped with the distance 
\begin{equation*}
	\rho (\omega^{1},\omega ^{2}):=\sum_{i=1}^{\infty }2^{-i}[(\max_{t\in
	\lbrack0,i]}|\omega_{t}^{1}-\omega_{t}^{2}|)\wedge1],\quad\omega^{1},\omega^{2}\in \Omega .
\end{equation*}%
Denote the canonical process by $B_{t}(\omega )=\omega _{t},t\in \mathbb{R}%
^{+}$. For each given $t\geq 0$, we define 
\begin{equation*}
	Lip(\Omega _{t}):=\{\varphi (B_{t_{1}\wedge t},\cdots ,B_{t_{n}\wedge
	t}):n\in \mathbb{N},t_{1},\cdots ,t_{n}\in \lbrack 0,\infty ),\varphi\in C_{l.Lip}(\mathbb{R}^{d\times n})\},\ Lip(\Omega ):=\cup _{t=1}^{\infty}Lip(\Omega _{t}),
\end{equation*}%
where $C_{l,Lip}(\mathbb{R}^{d\times n})$ is the space of all local
Lipschitz functions on $\mathbb{R}^{d\times n}$.

Let $\mathbb{S}_{d}$ denote the set of all $d\times d$ symmetric matrices and let $\mathbb{S}_{d}^{+}$ be the subset of non-negative matrices in $\mathbb{S}_{d}$. For each given monotonic and sublinear function $G:\mathbb{S}_{d}\rightarrow \mathbb{R}$, Peng constructed the $G$-expectation $\hat{\mathbb{E}}$ on $(\Omega ,Lip(\Omega ))$ (see \cite{peng2019nonlinear} for definition). Then the canonical process $B$ is a \thinspace $d$-dimensional $G$-Brownian motion under $\hat{\mathbb{E}}$. According to \cite{peng2019nonlinear}, for each $A\in\mathbb{S}_{d}$, there is a bounded, convex and closed subset $\Gamma \subset \mathbb{S}_{d}^{+}$ such that  
\begin{equation*}
	G(A)=\frac{1}{2}\sup_{\gamma\in \Gamma }\mathrm{tr}(\gamma A).
\end{equation*}
In this paper, we assume that there exist two constants $0<\underline{%
	\sigma }\leq \bar{\sigma}<\infty $ such that 
\begin{equation*}
	\frac{1}{2}\underline{\sigma }^{2}\mathrm{tr}[A-B]\leq G(A)-G(B)\leq \frac{1}{2}\bar{\sigma }^{2}\mathrm{tr}[A-B],\quad \text{for all }A\geq B.
\end{equation*}

For each $p\geq 1$, $L_{G}^{p}(\Omega )$ is defined as the completion of $Lip(\Omega )$ under the norm $\Vert X\Vert_{L_{G}^{p}}=(\hat{\mathbb{E}}[|X|^{p}])^{\frac{1}{p}}$. For each $t>0$, $L_{G}^{p}(\Omega _{t})$ can be similarly defined. $\hat{\mathbb{E}}:Lip(\Omega )\rightarrow \mathbb{R}$ can be continuously extended to the mapping from $L_{G}^{1}(\Omega )$ to $%
\mathbb{R}$. $(\Omega ,L_{G}^{1}(\Omega ), \hat{\mathbb{E}})$ is called $G$-expectation space.

Furthermore, we can define the conditional expectation of $ \xi=\varphi(B_{t_{1}},B_{t_{2}}-B_{t_{1}},\cdots,B_{t_{n}}-B_{t_{n-1}}) \in Lip(\Omega) $ through
\[ \hat{\mathbb{E}}_{t_{i}}[\xi]=\phi(B_{t_{1}},\cdots,B_{t_{i}}-B_{t_{i-1}}), \]
where $ \phi(x_{1},\cdots,x_{i})=\hat{\mathbb{E}}[\varphi(x_{1},\cdots,x_{i},B_{t_{i+1}}-B_{t_{i}},\cdots,B_{t_{n}}-B_{t_{n-1}})],\ 0\leq i\leq n $. Similarly, for each fixed $ t \geq 0 $, $ \hat{\mathbb{E}}_{t}:L_{G}^{p}(\Omega)\rightarrow L_{G}^{p}(\Omega_{t}) $ is well-defined.

\begin{theorem}[\cite{hu2009representation,denis2011function}]
	\label{thm2.1} Let $(\Omega ,L_{G}^{1}(\Omega),\hat{\mathbb{E}}) $ be a $G $-expectation space. Then there exists a weakly compact set of probability measures $\mathcal{P} $ on $(\Omega,\mathcal{F}) $ such that 
	\begin{align}
		\hat{\mathbb{E}}[\xi]=\sup_{P\in\mathcal{P}}E_{P}[\xi], \ \text{ for each }\xi\in L_{G}^{1}(\Omega).
	\end{align}
\end{theorem}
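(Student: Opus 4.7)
The plan is to first obtain an abstract dual representation of the sublinear functional $\hat{\mathbb{E}}$ on $Lip(\Omega)$, and then explicitly identify its dual elements with probability measures on the path space. The natural starting point is the Hahn-Banach/minimax observation that every sublinear, monotone functional which preserves constants admits a representation as the pointwise supremum of dominated linear functionals. Applying this to $\hat{\mathbb{E}}$ yields a family $\mathcal{Q}$ of linear expectations $\ell$ with $\ell\leq\hat{\mathbb{E}}$ and $\hat{\mathbb{E}}[\xi]=\sup_{\ell\in\mathcal{Q}}\ell(\xi)$. The first genuine task is to show that each such $\ell$ extends to a countably additive probability measure $P$ on $(\Omega,\mathcal{F})$; this follows from the downward continuity $\hat{\mathbb{E}}[\xi_n]\downarrow 0$ whenever $\xi_n\downarrow 0$ in $Lip(\Omega)$, which itself is inherited from the $G$-heat-equation construction of $\hat{\mathbb{E}}$.

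Next I would introduce the concrete candidate family $\mathcal{P}_G$ of laws of stochastic integrals $M^\sigma_t=\int_0^t\sigma_s\,dW_s$, where $W$ is a standard Brownian motion on an auxiliary probability space and $\sigma$ ranges over progressively measurable processes with $\sigma_s\sigma_s^\top\in\Gamma$. The heart of the proof is the identity $\hat{\mathbb{E}}[\xi]=\sup_{P\in\mathcal{P}_G}E_P[\xi]$ for cylindrical $\xi=\varphi(B_{t_1},\ldots,B_{t_n})$. I would prove this inductively in $n$: on each subinterval $[t_{i-1},t_i]$ the Feynman-Kac representation for the fully nonlinear $G$-heat equation $\partial_t u=G(D^2 u)$ identifies $\hat{\mathbb{E}}_{t_{i-1}}[\xi]$ with the value function of a stochastic control problem whose supremum is precisely $\operatorname{ess\,sup}_{P\in\mathcal{P}_G}E_P[\xi\mid\mathcal{F}_{t_{i-1}}]$. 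The extension from $Lip(\Omega)$ to $L_G^1(\Omega)$ is then routine by density and the inequality $|E_P[\xi]-E_P[\eta]|\leq\hat{\mathbb{E}}[|\xi-\eta|]$.

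To obtain weak compactness of $\mathcal{P}_G$, I would verify tightness through the uniform moment estimate $E_P[|B_t-B_s|^4]\leq C\bar{\sigma}^4(t-s)^2$, which holds for every $P\in\mathcal{P}_G$ because $\Gamma$ is uniformly bounded by the hypothesis on $G$, and then apply Kolmogorov's tightness criterion together with Prokhorov's theorem. Closedness under weak limits is obtained by a martingale-problem compactness argument: if $P_n\Rightarrow P_\infty$ with $P_n\in\mathcal{P}_G$, then a diagonal extraction on the associated integrands produces a limiting $\sigma^\infty$ with $\sigma^\infty(\sigma^\infty)^\top\in\Gamma$ representing $P_\infty$.

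The main obstacle is the identification step, namely matching the abstract dual of $\hat{\mathbb{E}}$ with the concrete family $\mathcal{P}_G$. Since the measures in $\mathcal{P}_G$ can be mutually singular, classical dominated-measure techniques are unavailable, and the proof must proceed through viscosity-solution theory for the $G$-heat equation (or through a BSDE comparison on an enlarged reference space). The two-sided ellipticity bounds $0<\underline{\sigma}\leq\bar{\sigma}<\infty$ imposed in the preliminaries are exactly what make the moment estimate, the regularity of the control value function, and the uniform continuity needed for the density argument all hold simultaneously, thereby ensuring that the representation extends to the entire space $L_G^1(\Omega)$.
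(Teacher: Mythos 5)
The paper gives no proof of Theorem \ref{thm2.1}: it is quoted from \cite{hu2009representation} and \cite{denis2011function}, so your proposal can only be compared with those references. Your main line --- identify $\hat{\mathbb{E}}$ on cylinder functions $\varphi(B_{t_1},\ldots,B_{t_n})$ with the value of a stochastic control problem over laws of $\int_0^{\cdot}\sigma_s\,\mathrm{d}W_s$ with $\sigma_s\sigma_s^{\top}\in\Gamma$, via the dynamic programming/Feynman--Kac representation of the $G$-heat equation $\partial_t u=G(D^2u)$, extend to $L_G^1(\Omega)$ by density using $|E_P[\xi]-E_P[\eta]|\le\hat{\mathbb{E}}[|\xi-\eta|]$, and obtain tightness from a uniform moment estimate plus Kolmogorov/Prokhorov --- is essentially the route taken in the cited works.

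Two steps need repair. First, your claim that the concrete family $\mathcal{P}_G$ is itself weakly closed ``by a diagonal extraction on the associated integrands'' is not justified: weak convergence of the laws $P_n$ gives no control whatsoever on the integrands $\sigma^n$, and producing a limiting $\sigma^{\infty}$ with $\sigma^{\infty}(\sigma^{\infty})^{\top}\in\Gamma$ representing $P_\infty$ is exactly the hard point you would have to prove. The standard resolution, and the one used in \cite{denis2011function}, is simply to define $\mathcal{P}$ as the weak closure of $\mathcal{P}_G$: tightness makes this closure weakly compact, and the supremum over the closure agrees with the supremum over $\mathcal{P}_G$ on $Lip(\Omega)$ (continuity of cylinder functions together with uniform integrability from the moment bounds), hence on all of $L_G^1(\Omega)$ by density. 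Second, your opening Hahn--Banach/Daniell--Stone paragraph is weaker than stated: $\Omega$ is a non-compact Polish space and $Lip(\Omega)$ contains unbounded functionals, so sequential downward continuity of $\hat{\mathbb{E}}$ alone does not yield countable additivity of the dominated linear functionals (a Dini-type argument is unavailable without tightness). Fortunately that abstract step is dispensable, since your concrete identification already produces the representing family; with the closure fix the proposal matches the proof in the references.
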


For this $\mathcal{P}$, we can define the capacity 
\begin{equation*}
	\mathrm{c}(A):=\sup_{P\in \mathcal{P}}P(A),\ \text{ for each }A\in \mathcal{F},
\end{equation*}
where $\mathcal{F}:=\bigvee_{t\geq 0}\mathcal{F}_{t}$ and $\mathcal{F}_{t}:=\sigma (B_{s}:s\leq t)$.

\begin{definition}
	A set $A\in \mathcal{F}$ is polar if $\mathrm{c}(A)=0$. Moreover, a property holds \textquotedblleft quasi-surely'' (q.s.) if it holds outside a polar set.
\end{definition}

\begin{definition}
	For each $T>0$ and $p\geq 1$, set 
	\begin{equation*}
		M_{G}^{p,0}(0,T):=\left\{ \eta (t)=\sum_{j=0}^{n-1}\xi
		_{j}I_{[t_{j},t_{j+1})}(t):n\in \mathbb{N},0=t_{0}<t_{1}<\cdots <t_{n}=T,\ \xi _{j}\in L_{G}^{p}(\Omega _{t_{j}})\right\} .
	\end{equation*}%
	$M_{G}^{p}(0,T)$ is defined as the completion of $M_{G}^{p,0}(0,T)$ under the norm $\Vert\eta\Vert_{M_{G}^{p}}:=(\hat{\mathbb{E}}[\int_{0}^{T}|\eta
	(t)|^{p}dt])^{\frac{1}{p}}$.
\end{definition}

According to \cite{peng2019nonlinear}, the integrals $\int_{0}^{t}\eta_{s}\mathrm{d}B_{s}$ and $\int_{0}^{t}\mu_{s} \mathrm{d}\langle B\rangle _{s}$ are well-defined for $\eta \in M_{G}^{2}(0,T)$ and $\mu \in M_{G}^{1}(0,T)$, where $\langle B\rangle $ denotes the quadratic variation process of $B$.

\begin{theorem}[\cite{hu2019levy}]\label{Levy}
	Let $ G $ be a given monotonic and sublinear function and let $(\Omega ,L_{G}^{1}(\Omega),\hat{\mathbb{E}}) $ be a sublinear expectation space. Suppose that $ (M_{t})_{t\geq0} $ is a $ d $-dimensional symmetric martingale satisfying $M_{0}=0,M_{t}\in(L_{G}^{3}(\Omega_{t}))^{d} $ for each $ t\geq0 $. In addition, we also need to assume that $\sup\{\hat{\mathbb{E}}[|M_{t+\epsilon}-M_{t}|^{3}]:t\leq T\}=o(\epsilon)$ as $ \epsilon\downarrow0 $ for each $ T\geq0 $. If the process $ \frac{1}{2}\left\langle AM_{t},M_{t}\right\rangle-G(A)t ,t\geq0 $, is a martingale for each $ A\in\mathbb{S}_{d} $, then $ M $ is a $ G $-Brownian motion.
\end{theorem}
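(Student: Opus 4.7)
The plan is to recover the full $G$-Brownian motion property from Peng's characterization of the $G$-normal distribution: it suffices to show that, for $\varphi\in C_{b}^{3}(\mathbb{R}^{d})$, the function $u(r,x):=\hat{\mathbb{E}}[\varphi(x+M_{s+r}-M_{s})]$ solves the $G$-heat equation $\partial_{r}u=G(D^{2}u)$ with $u(0,x)=\varphi(x)$, for then $M_{s+r}-M_{s}$ is $G$-normally distributed and independent of $\mathcal{F}_{s}$ in Peng's sublinear sense, which together with $M_{0}=0$ identifies $M$ as a $d$-dimensional $G$-Brownian motion (path-continuity being furnished by Kolmogorov's criterion and the $o(\epsilon)$ third-moment hypothesis). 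The entire argument thus reduces to the infinitesimal generator identity
\begin{equation*}
	\hat{\mathbb{E}}_{t}\bigl[\varphi(M_{t+\epsilon})-\varphi(M_{t})\bigr]=G\bigl(D^{2}\varphi(M_{t})\bigr)\,\epsilon+o(\epsilon)\qquad\text{as }\epsilon\downarrow 0.
\end{equation*}

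The key technical lemma I would establish first is the conditional quadratic identity
\begin{equation*}
	\hat{\mathbb{E}}_{t}\bigl[\tfrac{1}{2}\langle A(M_{t+\epsilon}-M_{t}),\,M_{t+\epsilon}-M_{t}\rangle\bigr]=G(A)\,\epsilon,\qquad A\in\mathbb{S}_{d}.
\end{equation*}
The algebraic identity $\tfrac{1}{2}\langle AM_{t+\epsilon},M_{t+\epsilon}\rangle-\tfrac{1}{2}\langle AM_{t},M_{t}\rangle=\langle AM_{t},M_{t+\epsilon}-M_{t}\rangle+\tfrac{1}{2}\langle A(M_{t+\epsilon}-M_{t}),M_{t+\epsilon}-M_{t}\rangle$, together with the hypothesis that $\tfrac{1}{2}\langle AM,M\rangle-G(A)t$ is a martingale, produces $G(A)\epsilon$ on the left after applying $\hat{\mathbb{E}}_{t}$. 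The cross term is annihilated by the symmetric martingale property of $M$ combined with the sublinear identity $\hat{\mathbb{E}}_{t}[X+Y]=\hat{\mathbb{E}}_{t}[Y]$ whenever $\hat{\mathbb{E}}_{t}[X]=\hat{\mathbb{E}}_{t}[-X]=0$. Once this is in hand, a Taylor expansion
\begin{equation*}
	\varphi(M_{t+\epsilon})-\varphi(M_{t})=\nabla\varphi(M_{t})\cdot(M_{t+\epsilon}-M_{t})+\tfrac{1}{2}\langle D^{2}\varphi(M_{t})(M_{t+\epsilon}-M_{t}),\,M_{t+\epsilon}-M_{t}\rangle+R_{\epsilon},
\end{equation*}
the third-moment bound $\hat{\mathbb{E}}[|R_{\epsilon}|]\leq C\hat{\mathbb{E}}[|M_{t+\epsilon}-M_{t}|^{3}]=o(\epsilon)$, and the same symmetric-martingale cancellation applied to the gradient term, yield the generator identity. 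A Duhamel-type partitioning of $[s,s+r]$ with vanishing mesh, followed by the standard uniqueness for the $G$-heat equation, then promotes the local identity to the desired PDE for $u$.

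The main obstacle I expect is transferring the conditional quadratic identity from a deterministic $A\in\mathbb{S}_{d}$, as delivered by the hypothesis, to the $\mathcal{F}_{t}$-measurable random Hessian $D^{2}\varphi(M_{t})$ that arises inside $\hat{\mathbb{E}}_{t}$ after Taylor expansion. This requires approximating $D^{2}\varphi(M_{t})$ by simple symmetric-matrix-valued random variables (for instance, partitioning $\mathbb{R}^{d}$ into small cells on which $D^{2}\varphi$ is nearly constant and assembling a finite indicator combination of deterministic matrices), and then controlling the approximation error uniformly in $\epsilon$ using the regularity $M_{t}\in(L_{G}^{3}(\Omega_{t}))^{d}$ and the $o(\epsilon)$ third-moment estimate. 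Once that extension is secured, stationarity and Peng-independence of the increments, and the passage from $C_{b}^{3}$ to $C_{l,Lip}$, follow by routine density and continuity arguments in the $G$-framework.
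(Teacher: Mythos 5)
The paper itself does not prove Theorem \ref{Levy}; it imports it from \cite{hu2019levy}, so the only meaningful comparison is with the proof given there, and your outline does reproduce its essential mechanism: the conditional quadratic identity $\hat{\mathbb{E}}_{t}\left[\tfrac{1}{2}\langle A(M_{t+\epsilon}-M_{t}),M_{t+\epsilon}-M_{t}\rangle\right]=G(A)\epsilon$ obtained from the martingale hypothesis after killing the cross term $\langle AM_{t},M_{t+\epsilon}-M_{t}\rangle$ by the symmetric-martingale property together with the conditional product rule $\hat{\mathbb{E}}_{t}[\eta X]=\eta^{+}\hat{\mathbb{E}}_{t}[X]+\eta^{-}\hat{\mathbb{E}}_{t}[-X]$; a second-order Taylor expansion whose remainder is $o(\epsilon)$ by the third-moment assumption; an approximation step replacing the deterministic $A$ by the $\mathcal{F}_{t}$-measurable Hessian; and identification of the increments through the $G$-heat equation and Peng's characterization of the $G$-normal distribution. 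In that sense the proposal is essentially the original argument.

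Two cautions on the points where your wording would not survive as stated. First, the direction of the PDE step: proving that $u(r,x):=\hat{\mathbb{E}}[\varphi(x+M_{s+r}-M_{s})]$ \emph{solves} the $G$-heat equation and then invoking uniqueness is circular, because verifying the viscosity (or semigroup) property of this $u$ requires a dynamic programming identity of the form $u(r+h,x)=\hat{\mathbb{E}}[u(r,x+M_{s+h}-M_{s})]$, which presupposes exactly the independence and stationarity of increments being proved. The correct direction — and the one taken in \cite{hu2019levy} — is to fix the solution $u$ of the $G$-heat equation with terminal datum $\varphi$, and use your telescoping partition argument, the generator estimate, and the regularity of $u$ to show $\hat{\mathbb{E}}_{s}[\varphi(x+M_{T}-M_{s})]=u(s,x)$; the conditional form of this identity delivers $G$-normality and independence simultaneously. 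Second, in the random-Hessian approximation do not use indicator functions of cells: indicators of Borel sets are in general not quasi-continuous, hence not in $L_{G}^{1}(\Omega_{t})$, and the conditional product rule is only available for such $\eta$; use a Lipschitz partition of unity composed with $M_{t}$, and control the error via $\hat{\mathbb{E}}_{t}[|M_{t+\epsilon}-M_{t}|^{2}]=2G(I)\epsilon$, which follows from your quadratic identity with $A=I$. A minor aside: the $o(\epsilon)$ third-moment hypothesis does not yield a Kolmogorov continuity estimate, but none is needed, since the conclusion concerns only the distributional properties of the increments.
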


\begin{remark}
	Let  $ d=1 $ and let $ G(x)=\frac{1}{2}(\bar{\sigma}^{2}x^{+}-\underline{\sigma}^{2}x^{-}) $ where $ 0<\underline{\sigma}\leq\bar{\sigma}<\infty $. Let $ (M_{t})_{t\geq0} $ be a symmetric martingale satisfying $ M_{0}=0 , M_{t}\in L_{G}^{3}(\Omega_{t}) $ for each $ t\geq0 $.  If $ \{M_{t}^{2}-\bar{\sigma}^{2}t\}_{t\geq0}  ,\  \{-M_{t}^{2}+\underline{\sigma}^{2}t\}_{t\geq0} $ are martingales and $\sup\{\hat{\mathbb{E}}[|M_{t+\epsilon}-M_{t}|^{3}]:t\leq T\}=o(\epsilon)$ as $ \epsilon\downarrow0 $ for each $ T\geq0 $, then $ M $ is a one-dimensional $ G $-Brownian motion.
\end{remark}

Let $d\geq 1$ and let $(\Omega ,L_{G}^{1}(\Omega ),\hat{\mathbb{E}})$ be a $
G $-expectation space. Let $B=(B^{(1)},...,B^{(d)})$ be a $d$-dimensional $G$-Brownian motion. For each $x\in^{d}$, we denote by $B^{x}:=B+x=(\bar{B}^{(1)},...,\bar{B}^{(d)})$ the $d$-dimensional $G$-Brownian motion starting at $x$. Obviously, $\mathrm{d}\bar{B}_{t}^{(i)}= \mathrm{d}B_{t}^{(i)},\ \mathrm{d}\langle B^{(i)},B^{(j)}\rangle_{t}=\mathrm{d}\langle \bar{B}^{(i)},\bar{B}^{(j)}\rangle_{t} $ for $ 1\leq i,j\leq d$. For $t\geq0$, define $\beta_{t}=\sum_{i=1}^{d}\int_{0}^{t}\frac{\bar{B}_{s}^{(i)}}{\left\vert
B_{s}^{x}\right\vert }\mathrm{d}B_{s}^{(i)}$. Set $ \underline{\sigma }^{2}=-\hat{\mathbb{E}}\left[ -\langle \beta \rangle _{1}\right],\ \bar{\sigma}^{2}=\hat{\mathbb{E}}\left[ \langle \beta \rangle_{1}\right]  $.

\begin{description}
	\item[(A)] For each $A\in \mathbb{S}_{d},\ G(A)=G'(\mathrm{tr}[A])$ where $G'(a)=\frac{1}{2}(\bar{\sigma}^{2}a^{+}-\underline{\sigma }^{2}a^{-})$ for $a\in \mathbb{R} $.
\end{description}

\begin{lemma}[\cite{hu2025gbes}]\label{beta}
	Suppose $G$ satisfies condition (A). Then for $1\leq
	i,j\leq d$ and $i\neq j$, we have $\langle B^{(1)}\rangle =\langle B^{(i)}\rangle,\langle B^{(i)},B^{(j)}\rangle =0$ q.s. In this case, $\beta $ is a one-dimensional $G'$-Brownian motion and $\langle \beta \rangle=\langle B^{(1)}\rangle $ q.s. Moreover, for $ 0\leq s\leq t $, $ \beta_{t}-\beta_{s} $ is independent from $ L_{G}^{1}(\Omega_{s}) $.
\end{lemma}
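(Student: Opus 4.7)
The plan is to first use condition~(A) to pin down the representing set $\Gamma$ of $G$, read off the quadratic variation identities for $B$ from this matrix structure, and then invoke Theorem~\ref{Levy} in its one-dimensional form to identify $\beta$ as a $G'$-Brownian motion.

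The representation $G(A) = \tfrac{1}{2}\sup_{\gamma \in \Gamma}\mathrm{tr}(\gamma A)$ together with condition~(A) gives $\sup_{\gamma \in \Gamma}\mathrm{tr}(\gamma A) = \bar{\sigma}^2 (\mathrm{tr}\,A)^{+} - \underline{\sigma}^2 (\mathrm{tr}\,A)^{-}$ for every $A \in \mathbb{S}_d$. Feeding in the traceless test matrices $e_i e_i^{\top} - e_j e_j^{\top}$ and $e_i e_j^{\top} + e_j e_i^{\top}$ (for $i \neq j$) forces every $\gamma \in \Gamma$ to have equal diagonal entries and zero off-diagonal entries, while $A = \pm I$ confines the common diagonal value to $[\underline{\sigma}^2, \bar{\sigma}^2]$; hence $\Gamma = \{\sigma^2 I : \sigma^2 \in [\underline{\sigma}^2, \bar{\sigma}^2]\}$. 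Because under each $P \in \mathcal{P}$ the matrix quadratic covariation $\langle B\rangle$ is given by $\int_0^{\cdot} \gamma_s\,\mathrm{d}s$ for some $\Gamma$-valued process $\gamma$, this scalar-multiple-of-identity shape immediately yields $\langle B^{(i)}\rangle = \langle B^{(1)}\rangle$ and $\langle B^{(i)}, B^{(j)}\rangle = 0$ q.s.\ for $i \neq j$. As a byproduct, $B^{(1)}$ is itself a one-dimensional $G'$-Brownian motion, since $G(a\, e_1 e_1^{\top}) = G'(a)$.

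Using these identities and the $G$-It\^{o} isometry I obtain
\[
	\langle \beta\rangle_t = \sum_{i=1}^{d}\int_0^t \frac{(\bar{B}_s^{(i)})^2}{|B_s^x|^2}\,\mathrm{d}\langle B^{(1)}\rangle_s = \langle B^{(1)}\rangle_t \quad \text{q.s.}
\]
To apply the one-dimensional form of Theorem~\ref{Levy} to $\beta$ against $G'$, I verify that $\beta$ is a symmetric martingale starting at $0$ with $\beta_t \in L_G^3(\Omega_t)$ (immediate from the stochastic-integral construction and the bound $|\bar{B}_s^{(i)}|/|B_s^x| \leq 1$); that $\{\beta_t^2 - \bar{\sigma}^2 t\}$ and $\{-\beta_t^2 + \underline{\sigma}^2 t\}$ are martingales, which, after applying $G$-It\^{o}'s formula to obtain $\beta_t^2 = 2\int_0^t \beta_s\,\mathrm{d}\beta_s + \langle B^{(1)}\rangle_t$, reduces to showing that $\langle B^{(1)}\rangle_t - \bar{\sigma}^2 t$ and $\underline{\sigma}^2 t - \langle B^{(1)}\rangle_t$ are martingales (direct conditional-expectation calculations using $\hat{\mathbb{E}}_s[\langle B^{(1)}\rangle_t - \langle B^{(1)}\rangle_s] = \bar{\sigma}^2(t-s)$ and $\hat{\mathbb{E}}_s[-(\langle B^{(1)}\rangle_t - \langle B^{(1)}\rangle_s)] = -\underline{\sigma}^2(t-s)$); and finally that $\sup_{t \leq T}\hat{\mathbb{E}}[|\beta_{t+\epsilon} - \beta_t|^3] = o(\epsilon)$, which follows from the $G$-BDG inequality since
\[
	\hat{\mathbb{E}}[|\beta_{t+\epsilon} - \beta_t|^3] \leq C\,\hat{\mathbb{E}}\bigl[(\langle B^{(1)}\rangle_{t+\epsilon} - \langle B^{(1)}\rangle_t)^{3/2}\bigr] \leq C\bar{\sigma}^3 \epsilon^{3/2}.
\]
Theorem~\ref{Levy} then identifies $\beta$ as a one-dimensional $G'$-Brownian motion, and the independence of $\beta_t - \beta_s$ from $L_G^1(\Omega_s)$ is built into this identification.

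The step I expect to require the most care is the passage from the matrix description of $\Gamma$ to the quasi-sure identities for $\langle B^{(i)}\rangle$ and $\langle B^{(i)}, B^{(j)}\rangle$, which rests on the $\Gamma$-representation of the quadratic variation under each $P \in \mathcal{P}$ and needs to be cited carefully; the remaining verifications are routine $G$-stochastic-calculus computations.
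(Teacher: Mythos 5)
The paper itself does not reprove this lemma --- it is imported from \cite{hu2025gbes} (with the remark that the argument there extends to $d=1$, cf.\ Lemma 5.4 of \cite{hu2019levy}) --- so your sketch can only be measured against that standard route, which it largely follows: condition (A) forces every $\gamma\in\Gamma$ to be of the form $\sigma^{2}I$ with $\sigma^{2}\in[\underline{\sigma}^{2},\bar{\sigma}^{2}]$, whence $\langle B^{(i)}\rangle=\langle B^{(1)}\rangle$, $\langle B^{(i)},B^{(j)}\rangle=0$ q.s., then $\langle\beta\rangle=\langle B^{(1)}\rangle$, and the one-dimensional form of Theorem \ref{Levy} identifies $\beta$ as a $G'$-Brownian motion. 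Your verifications of the hypotheses are essentially right: the BDG bound gives the $o(\epsilon)$ condition, and the reduction of the two martingale conditions to $\hat{\mathbb{E}}_{s}[\langle B^{(1)}\rangle_{t}-\langle B^{(1)}\rangle_{s}]=\bar{\sigma}^{2}(t-s)$ and $-\hat{\mathbb{E}}_{s}[-(\langle B^{(1)}\rangle_{t}-\langle B^{(1)}\rangle_{s})]=\underline{\sigma}^{2}(t-s)$ is legitimate because the It\^{o}-integral term is a \emph{symmetric} martingale, so conditional additivity applies. The step you flag yourself --- passing from the shape of $\Gamma$ to the quasi-sure identities via the representation of $\langle B\rangle$ under each $P\in\mathcal{P}$ --- does need a careful citation; a cleaner alternative is to note that for traceless $A$ condition (A) gives $G(A)=G(-A)=0$, so $\tfrac12\langle AB,B\rangle$ is a symmetric martingale of finite variation and hence vanishes q.s. (Also, the identity $\sum_{i}(\bar{B}^{(i)}_{s})^{2}/|B^{x}_{s}|^{2}=1$ silently assumes $|B^{x}_{s}|\neq0$; this is where the well-definedness of $\beta$, handled in \cite{hu2025gbes}, enters.)

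The genuine gap is the final assertion. Claiming that the independence of $\beta_{t}-\beta_{s}$ from $L_{G}^{1}(\Omega_{s})$ is ``built into'' the identification is not correct as stated: by definition, a $G'$-Brownian motion has increments independent only of its \emph{own} past $(\beta_{t_{1}},\dots,\beta_{t_{n}})$, $t_{i}\leq s$, whereas the lemma asserts independence from the whole of $L_{G}^{1}(\Omega_{s})$, the space generated by the $d$-dimensional motion $B$, which is strictly larger; this is precisely why the lemma records it as a separate ``Moreover''. To obtain it one must exploit that $\beta$ is a symmetric martingale with respect to the full filtration $(\hat{\mathbb{E}}_{t})_{t\geq0}$ and look inside the proof of the L\'{e}vy characterization: for $\varphi\in C_{b.Lip}$ one shows $u(t-\cdot,x+\beta_{\cdot})$ is a martingale, where $u$ solves the $G'$-heat equation, so that $\hat{\mathbb{E}}_{s}[\varphi(x+\beta_{t}-\beta_{s})]=u(t-s,x)$ is deterministic for every $x$, and then a standard approximation for $\Omega_{s}$-measurable random variables yields independence from $L_{G}^{1}(\Omega_{s})$. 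Without this extra argument your proof establishes the quadratic-variation identities and that $\beta$ is a $G'$-Brownian motion, but not the third conclusion of the lemma.
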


\begin{proof}
	The default condition in \cite{hu2025gbes} is $ d\geq2 $, but the above lemma still holds when $ d=1 $. Since the proof of Lemma 4.3 in \cite{hu2025gbes} also works to $ d=1 $, we omit it here. Furthermore, for the case when $ d=1 $, one can also refer to Lemma 5.4 in \cite{hu2019levy}.
\end{proof}

\begin{definition}[\cite{hu2025gbes}]\label{def bes}
	Let $ G $ be a function satisfying the condition (A). For $ x\in\mathbb{R}^{d} $,  let $ B^{x} $ be a $ d $-dimensional $ G $-Brownian motion starting at $ x $ defined as above. 
	\[R_{t}:=|B_{t}^{x}|=\sqrt{\left( \bar{B}_{t}^{(1)}\right) ^{2}+\cdots+\left( \bar{B}_{t}^{(d)}\right) ^{2}}, \quad t\in[0,\infty) , \]
	is called a $ G $-Bessel process with dimension $ d $ starting at $ r=|x| $ and is denoted by $ G $-$\mathrm{BES} _{\mathrm{r}}^{\mathrm{d}} $.
\end{definition}

\section{Definition and path properties of squared $G$-Bessel processes}\label{sec3}

According to Definition \ref{def bes}, we can define the squared $G$-Bessel process in a similar way.

\begin{definition}[squared $G$-Bessel processes]\label{def besq}
	Let $d\geq 1$ and let $G$ satisfy the condition (A). For $x\in \mathbb{R}^{d}$, let $B^{x}:=B+x=(\bar{B}^{(1)},...,\bar{B}^{(d)})$ be a $d$-dimensional $G$-Brownian motion starting at $x$ on $(\Omega,L_{G}^{1}(\Omega),\hat{\mathbb{E}})$. 
	\begin{equation*}
		Z_{t}:=\left\vert B_{t}^{x}\right\vert^{2}=\left( \bar{B}_{t}^{(1)}\right) ^{2}+...+\left( \bar{B}_{t}^{(d)}\right) ^{2},\quad t\geq 0,
	\end{equation*}
	is called a squared $G$-Bessel process with dimension $d$ starting at $z=\left\vert x\right\vert ^{2}$ and is denoted by $G$-$\mathrm{BESQ}_{z}^{d}$.
\end{definition}

\begin{remark}
	From Lemma 3.3 of \cite{hu2025gbes}, we know that $ |B^{y}|^{2}\overset{d}{=}|B^{x}|^{2} $ when $ |y|^{2}=|x|^{2} $. So the squared $G$-Bessel process is well-defined.
\end{remark}

Since the squared $G$-Bessel process is defined by $B^{x}$, some properties
can be obtained through the properties of $B^{x}$.

\begin{corollary}\label{cor3.3}
	For any $\lambda >0$, if $Z$ is a $G$-$\mathrm{BESQ}_{z}^{d}$, then
	process $(\lambda ^{-1}Z_{\lambda t})$ is a $G$-$\mathrm{BESQ}_{z\lambda ^{-1}}^{d}$.
\end{corollary}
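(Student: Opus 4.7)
The plan is to invoke the scaling invariance of $G$-Brownian motion and then apply Definition \ref{def besq} directly. Recall that for any $\lambda > 0$, the rescaled process $\tilde{B}_t := \lambda^{-1/2} B_{\lambda t}$ is again a $d$-dimensional $G$-Brownian motion under $\hat{\mathbb{E}}$; this is a standard fact about $G$-Brownian motion (the finite-dimensional distributions of $\tilde{B}$ coincide with those of $B$ in the $G$-expectation sense, which follows because $G$ is positively homogeneous of degree one, so the scaling $x \mapsto \lambda^{-1/2} x$ preserves the generator).

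First I would write, directly from Definition \ref{def besq},
\begin{equation*}
\lambda^{-1} Z_{\lambda t} \;=\; \lambda^{-1}\bigl|B_{\lambda t}^{x}\bigr|^{2} \;=\; \bigl|\lambda^{-1/2}(B_{\lambda t}+x)\bigr|^{2} \;=\; \bigl|\tilde{B}_{t} + y\bigr|^{2},
\end{equation*}
where $y := \lambda^{-1/2} x$ and $\tilde{B}_t := \lambda^{-1/2} B_{\lambda t}$. Then I would observe that $\tilde{B}$ is a $d$-dimensional $G$-Brownian motion, so $\tilde{B}^{y} := \tilde{B} + y$ is a $d$-dimensional $G$-Brownian motion starting at $y$, with $|y|^{2} = \lambda^{-1}|x|^{2} = z\lambda^{-1}$.

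Finally, by Definition \ref{def besq} applied to the $G$-Brownian motion $\tilde{B}^{y}$, the process $(|\tilde{B}_{t}^{y}|^{2})_{t\geq 0} = (\lambda^{-1} Z_{\lambda t})_{t\geq 0}$ is a $G$-$\mathrm{BESQ}_{z\lambda^{-1}}^{d}$, as required. The only point that needs a line or two of justification is the scaling property of $G$-Brownian motion itself, and the well-posedness of the resulting squared $G$-Bessel process (which is guaranteed by the remark following Definition \ref{def besq}, since the starting point only enters through its modulus). There is no substantive obstacle here; the statement is essentially a transcription of Brownian scaling through the quadratic map.
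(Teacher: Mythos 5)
Your proposal is correct and follows essentially the same route as the paper: rewrite $\lambda^{-1}Z_{\lambda t}$ as the squared modulus of the rescaled process $\lambda^{-1/2}B^{x}_{\lambda t}$, and invoke the scaling invariance of $G$-Brownian motion (the paper cites Remark 1.4 in Chapter III of Peng's book for exactly this fact) to identify it as a $G$-Brownian motion starting at $\lambda^{-1/2}x$, hence the square is a $G$-$\mathrm{BESQ}_{z\lambda^{-1}}^{d}$ by definition. No gaps.
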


\begin{proof}
	Through a change of the variable, we have
	\begin{equation*}
		\lambda ^{-1}Z_{\lambda t}=\left( \lambda ^{-\frac{1}{2}}\bar{B}_{\lambda
		t}^{(1)}\right) ^{2}+...+\left( \lambda ^{-\frac{1}{2}}\bar{B}_{\lambda t}^{(d)}\right) ^{2},\quad t\geq 0.
	\end{equation*}
	From Remark 1.4 in Chapter III of \cite{peng2019nonlinear}, $(\lambda ^{-\frac{1}{2}}B_{\lambda t}^{x})_{t\geq 0}$ is a $d$-dimensional $G$-Brownian motion starting at $\lambda ^{-\frac{1}{2}}x$. Thus, we can draw the conclusion.
\end{proof}

\begin{corollary}\label{cor3.4}
	If $Z$ is a $G$-$\mathrm{BESQ}_{z}^{d}$, then 
	\begin{equation*}
		\lim_{t\rightarrow \infty }\frac{Z_{t}}{t^{2}}=0\quad \text{q.s.}
	\end{equation*}
\end{corollary}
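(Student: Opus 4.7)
The plan is to reduce the assertion to a $G$-analogue of the strong law of large numbers for the underlying $G$-Brownian motion. Writing $Z_t = |B_t + x|^2$ and using the elementary inequality $(a+b)^2 \le 2a^2 + 2b^2$, one has $Z_t/t^2 \le 2|x|^2/t^2 + 2|B_t|^2/t^2$. The first term is deterministic and vanishes as $t\to\infty$, so it is enough to show $|B_t|/t \to 0$ quasi-surely.

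For this I would combine a fourth-moment maximal estimate with a dyadic Borel--Cantelli argument at the capacity level. The distributional scaling $B_t \overset{d}{=} \sqrt{t}\, B_1$ together with a Doob-type maximal inequality applied componentwise to the $G$-martingales $B^{(i)}$ should give a bound of the form
\[ \hat{\mathbb{E}}\Bigl[\sup_{0 \le s \le T} |B_s|^{4}\Bigr] \le C\, T^{2}, \]
with $C$ depending only on $\bar\sigma$ and $d$. Since $\mathrm{c} = \sup_{P\in\mathcal{P}} P$, Markov's inequality transfers to the capacity, so that for every $\epsilon > 0$ and every integer $n \ge 0$,
\[ \mathrm{c}\Bigl(\sup_{2^{n} \le t \le 2^{n+1}} \tfrac{|B_t|}{t} > \epsilon\Bigr) \le \frac{\hat{\mathbb{E}}\bigl[\sup_{0\le s\le 2^{n+1}}|B_s|^{4}\bigr]}{(\epsilon\, 2^{n})^{4}} \le \frac{C'}{\epsilon^{4}\, 2^{2n}}, \]
which is summable in $n$. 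The Borel--Cantelli lemma for capacity, an immediate consequence of the countable subadditivity of $\mathrm{c}$, then yields $\limsup_{t\to\infty} |B_t|/t \le \epsilon$ quasi-surely, and intersecting the resulting polar sets over a countable sequence $\epsilon_k \downarrow 0$ concludes the proof.

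The main technical obstacle is justifying the maximal fourth-moment bound in exactly this form within the $G$-framework: $|B|^{4}$ is not itself a convenient $G$-submartingale, so one has to apply $G$-Doob separately to each coordinate $B^{(i)}$ (a symmetric $G$-martingale with quadratic variation controlled by $\bar\sigma^{2} t$) and then combine. All remaining ingredients---dyadic truncation, Markov at the capacity level, and capacity Borel--Cantelli---are standard. As a sanity check, one can also read the decay from Corollary \ref{cor3.3}: the family $\{\lambda^{-1} Z_{\lambda}\}_{\lambda>0}$ of squared $G$-Bessel processes at time $1$ starting from $z/\lambda$ is distributionally tight, so the extra factor $1/t$ in $Z_t/t^2$ is what produces the quasi-sure decay, consistent with the estimate above.
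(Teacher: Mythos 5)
Your proposal is correct and follows essentially the same route as the paper: reduce to $\limsup_{t\to\infty}|B_t|/t=0$ q.s., bound the capacity of dyadic-block supremum events by a martingale maximal/Markov estimate, and conclude with the capacity Borel--Cantelli lemma over countably many thresholds. The only difference is cosmetic: the paper applies the second-moment maximal inequality in capacity form (Corollary 4.11 of Xu--Zhang) componentwise, which already gives a summable bound of order $2^{-n}$, so your fourth-moment Doob bound---justifiable by applying classical Doob under each $P\in\mathcal{P}$ and taking suprema---is not needed.
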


\begin{proof}
	Without loss of generality, we may assume $ z=0 $. According to the definition \ref{def besq}, we only need to prove that $%
	\lim_{t\rightarrow \infty }\frac{B_{t}^{(1)}}{t}=0 $ q.s. Let $A_{\delta
	}=\{\omega :\limsup_{t\rightarrow \infty }|\frac{B_{t}^{(1)}(\omega )}{t}|>\delta \}$ for any $\delta >0$. Define 
	\begin{equation*}
		B_{\delta }^{n}=\left\lbrace \omega :\sup_{2^{n-1}\leq t\leq 2^{n}}\left| B_{t}^{(1)}(\omega)\right| >\delta 2^{n-1}\right\rbrace .
	\end{equation*}
	Clearly, $A_{\delta }\subset\limsup_{n\rightarrow \infty } B_{\delta }^{n}$. By Corollary 4.11 of \cite{xu2010martingale}, we obtain 
	\begin{align}
		\mathrm{c}(B_{\delta }^{n})\leq {}& \mathrm{c}\left( \left\lbrace \sup_{0\leq t\leq2^{n}}\left| B_{t}^{(1)}\right| >\delta 2^{n-1}\right\rbrace \right)   \notag \\
		\leq {}& \frac{\hat{\mathbb{E}}\left[ \left| B_{2^{n}}^{(1)}|\right| ^{2}\right] }{(\delta 2^{n-1})^{2}}\notag \\
		={}& \frac{\bar{\sigma}^{2}}{\delta ^{2}2^{n-2}}.  \notag
	\end{align}%
	Since $\sum_{n=1}^{\infty }\mathrm{c}(B_{\delta }^{n})<\infty $, by Lemma 5
	of \cite{denis2011function}, it follows that $\mathrm{c}(\limsup_{n
		\rightarrow \infty }B_{\delta }^{n})=0$. Therefore, $A_{\delta }$ is polar.
	Since $\delta $ can be an arbitrary positive number, we get the desired
	result.
\end{proof}

\begin{corollary}
	If $ Z $ is a $G$-$\mathrm{BESQ}_{0}^{d}$. Define  
	\begin{align*}
		X_{t}=
		\begin{cases}
			t^{2}Z_{\frac{1}{t}},\quad t>0,\\
			0,\quad t=0.
		\end{cases}
	\end{align*}
	Then for each $ t\geq0 $, $ X_{t}\overset{d}{=}Z_{t} $. Moreover, $ X $ is continuous in $ t $. 
\end{corollary}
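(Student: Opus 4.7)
The plan is to reduce both claims to the scaling property of $G$-Brownian motion (Remark 1.4 in Chapter III of \cite{peng2019nonlinear}, already used in Corollary \ref{cor3.3}) and to the asymptotic result in Corollary \ref{cor3.4}. Since $Z_t=|B_t|^2=\sum_{i=1}^d (B_t^{(i)})^2$ when the starting point is $0$, I can rewrite $X_t$ as a square of a time-scaled $G$-Brownian motion and then invoke scaling.

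For the distributional identity, I would apply the scaling property with $\lambda=1/t^2$ for each fixed $t>0$. This gives that the process $(tB_{s/t^2})_{s\geq 0}$ is itself a $d$-dimensional $G$-Brownian motion; evaluating at $s=t$ yields $tB_{1/t}\overset{d}{=}B_{t}$ jointly in the $d$ components. Consequently
\[
X_t=t^{2}\,Z_{1/t}=\sum_{i=1}^{d}\bigl(tB^{(i)}_{1/t}\bigr)^{2}\overset{d}{=}\sum_{i=1}^{d}\bigl(B^{(i)}_{t}\bigr)^{2}=Z_{t},
\]
which handles the case $t>0$; for $t=0$ the equality $X_0=0=Z_0$ is immediate. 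Strictly speaking, to make this a proof in the $G$-framework I would verify $\hat{\mathbb{E}}[\varphi(X_t)]=\hat{\mathbb{E}}[\varphi(Z_t)]$ for every $\varphi\in C_{l.Lip}(\mathbb{R})$ via the scaling identity above, but this is a routine unfolding of the definition.

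For the continuity of $X$, the case $t>0$ is automatic since $B$ has $q.s.$ continuous paths and $t\mapsto t^2 Z_{1/t}$ is a composition of continuous functions on $(0,\infty)$. The only nontrivial point is continuity at $t=0$: I need $X_t\to 0$ $q.s.$ as $t\downarrow 0$. Setting $s=1/t$, this is precisely the statement that $Z_{s}/s^{2}\to 0$ $q.s.$ as $s\to\infty$, which is exactly Corollary \ref{cor3.4}. Combining these observations gives both assertions.

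The argument is essentially a direct transcription of the classical time-inversion of squared Bessel processes; no genuine obstacle arises, because the scaling property of $G$-Brownian motion is already available and the behavior at infinity has been established in Corollary \ref{cor3.4}. The only subtlety worth flagging is that the claim concerns one-dimensional marginals, so there is no need to promote the scaling identity to a process-level identification (and in particular no need to check finite-dimensional distributions or quasi-continuity of the inverted process).
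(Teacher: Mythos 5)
Your proposal is correct and follows essentially the same route as the paper: the paper's one-line proof combines Corollary \ref{cor3.3} (whose content is exactly the $G$-Brownian scaling you invoke, with $\lambda=1/t^2$) for the identity $X_t\overset{d}{=}Z_t$, and Corollary \ref{cor3.4} for the quasi-sure continuity at $t=0$. You merely unpack Corollary \ref{cor3.3} into the underlying scaling property instead of citing it, which changes nothing of substance.
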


\begin{proof}
	Combining Corollary \ref{cor3.3} and Corollary \ref{cor3.4} gives us the desired result.
\end{proof}

We now present the G-SDE satisfied by the squared $G$-Bessel process.

\begin{proposition}\label{solution besq}
	Let $T,z\geq0$ and let $d\in \mathbb{N}^{+}$. Let $Z$ be the $G$-$\mathrm{BESQ}_{z}^{d}$. Then $Z$ is the unique non-negative solution of the following equation in $ M_{G}^{1}(0,T) $:
	\begin{equation}\label{besq}
		Z_{t}=z+2\int_{0}^{t}\sqrt{Z_{s}}\mathrm{d}\beta _{s}+d\left\langle \beta\right\rangle _{t},\quad t\in \left[ 0,T\right].
	\end{equation}
	Here $\beta _{t}:=\sum_{i=1}^{d}\int_{0}^{t}\frac{\bar{B}_{s}^{(i)}}{
	\left\vert B_{s}^{x}\right\vert }\mathrm{d}B_{s}^{(i)},t\in \lbrack 0,T]$, is a one-dimensional $G'$-Brownian motion. Moreover, $Z\in \tilde{M}_{G}^{2}(0,T)$.
\end{proposition}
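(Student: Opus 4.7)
The plan is to split the proposition into two tasks: verifying that $Z = |B^{x}|^{2}$ satisfies the SDE (existence), and establishing pathwise uniqueness; the integrability $Z\in\tilde{M}_{G}^{2}(0,T)$ will fall out of standard $G$-Brownian moment bounds. Existence is a direct application of the $G$-It\^o formula, while uniqueness is the main obstacle, since the diffusion coefficient $2\sqrt{\cdot}$ is only $1/2$-H\"older; I will handle this by a Yamada--Watanabe type approximation carried out inside the sublinear framework.

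For existence, I apply the $G$-It\^o formula componentwise to obtain $(\bar{B}_{t}^{(i)})^{2} = (x^{(i)})^{2} + 2\int_{0}^{t} \bar{B}_{s}^{(i)}\,\mathrm{d}B_{s}^{(i)} + \langle B^{(i)}\rangle_{t}$. Summing over $i$ and invoking Lemma~\ref{beta} under condition (A) (so $\langle B^{(i)}\rangle = \langle\beta\rangle$ q.s.\ and cross-variations vanish) yields
\begin{equation*}
Z_{t} = z + 2\sum_{i=1}^{d}\int_{0}^{t} \bar{B}_{s}^{(i)}\,\mathrm{d}B_{s}^{(i)} + d\,\langle\beta\rangle_{t}.
\end{equation*}
With the convention $\bar{B}_{s}^{(i)}/|B_{s}^{x}| := 0$ on $\{B_{s}^{x} = 0\}$ already implicit in the definition of $\beta$, the identity $\sqrt{Z_{s}}\cdot\bar{B}_{s}^{(i)}/|B_{s}^{x}| = \bar{B}_{s}^{(i)}$ holds q.s., so the martingale part rewrites as $2\int_{0}^{t}\sqrt{Z_{s}}\,\mathrm{d}\beta_{s}$. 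The regularity $Z\in\tilde{M}_{G}^{2}(0,T)$ and $\sqrt{Z}\in M_{G}^{2}(0,T)$ (so the stochastic integral is well defined) follow from $\hat{\mathbb{E}}[|B_{t}^{x}|^{2p}]\leq C_{p}(1+|x|^{2p}+t^{p})$ together with a density/closure argument.

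For uniqueness, suppose $Z^{1}, Z^{2}\in M_{G}^{1}(0,T)$ are both nonnegative solutions and set $Y := Z^{1} - Z^{2}$, so $Y_{t} = 2\int_{0}^{t}(\sqrt{Z_{s}^{1}}-\sqrt{Z_{s}^{2}})\,\mathrm{d}\beta_{s}$. Choose $a_{n}\downarrow 0$ with $a_{0} = 1$ and $\int_{a_{n}}^{a_{n-1}}u^{-1}\,\mathrm{d}u = n$, and a continuous $\rho_{n}\geq 0$ supported in $(a_{n}, a_{n-1})$ with $\rho_{n}(u)\leq 2/(nu)$ and $\int\rho_{n} = 1$. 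Set $\phi_{n}(y) := \int_{0}^{|y|}\!\int_{0}^{v}\rho_{n}(u)\,\mathrm{d}u\,\mathrm{d}v$; then $\phi_{n}\in C^{2}$, $0\leq\phi_{n}(y)\leq|y|$, $|y|-\phi_{n}(y)\leq a_{n-1}$, $|\phi_{n}'|\leq 1$, and $\phi_{n}''(y)|y|\leq 2/n$. Applying $G$-It\^o to $\phi_{n}(Y_{t})$ gives
\begin{equation*}
\phi_{n}(Y_{t}) = 2\int_{0}^{t}\phi_{n}'(Y_{s})(\sqrt{Z_{s}^{1}}-\sqrt{Z_{s}^{2}})\,\mathrm{d}\beta_{s} + 2\int_{0}^{t}\phi_{n}''(Y_{s})(\sqrt{Z_{s}^{1}}-\sqrt{Z_{s}^{2}})^{2}\,\mathrm{d}\langle\beta\rangle_{s}.
\end{equation*}
Taking $\hat{\mathbb{E}}$ kills the symmetric martingale term; using $(\sqrt{a}-\sqrt{b})^{2}\leq|a-b|$, the bound $\hat{\mathbb{E}}[\int_{0}^{T}\eta_{s}\,\mathrm{d}\langle\beta\rangle_{s}]\leq\bar{\sigma}^{2}\hat{\mathbb{E}}[\int_{0}^{T}\eta_{s}\,\mathrm{d}s]$ for nonnegative $\eta$, and $\phi_{n}''(Y_{s})|Y_{s}|\leq 2/n$ yields $\hat{\mathbb{E}}[\phi_{n}(Y_{t})]\leq 4\bar{\sigma}^{2}T/n$. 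Combining with $\hat{\mathbb{E}}[|Y_{t}|]\leq\hat{\mathbb{E}}[\phi_{n}(Y_{t})] + a_{n-1}$ and sending $n\to\infty$ gives $\hat{\mathbb{E}}[|Y_{t}|] = 0$, hence $Z^{1} = Z^{2}$ q.s. The delicate technical point will be verifying that all intermediate integrands lie in the appropriate $M_{G}$ classes, so that both the $G$-It\^o formula and the zero-expectation property of the symmetric stochastic integral may be invoked, along the lines of the $G$-CIR uniqueness analysis in \cite{A2023GCIR}.
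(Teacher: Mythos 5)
Your existence and uniqueness steps follow the same route as the paper: the paper also obtains (\ref{besq}) by applying the It\^o formula to $|B_t^{x}|^{2}$ and invoking Lemma \ref{beta}, and for uniqueness it simply cites the argument of Proposition 3.2 of \cite{A2023GCIR}, which is exactly the Yamada--Watanabe scheme you write out; your computation with $\phi_n$ (using $(\sqrt{a}-\sqrt{b})^{2}\leq|a-b|$, the bound $\phi_n''(y)|y|\leq 2/n$, the vanishing expectation of the symmetric integral, and $\hat{\mathbb{E}}[\int_0^t\eta_s\,\mathrm{d}\langle\beta\rangle_s]\leq\bar{\sigma}^{2}\hat{\mathbb{E}}[\int_0^t\eta_s\,\mathrm{d}s]$) is the right one, modulo the $M_G$-class verifications that you flag and that the paper likewise defers to \cite{A2023GCIR}.

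The genuine gap is in the last assertion, $Z\in\tilde{M}_{G}^{2}(0,T)$. A moment bound $\hat{\mathbb{E}}[|B_t^{x}|^{2p}]\leq C_p(1+|x|^{2p}+t^{p})$ together with a density/closure argument can at best place $Z$ in $M_G^{2}(0,T)$, the space generated by cylinder functionals of the $d$-dimensional motion $B$. But $\tilde{M}_{G}^{2}(0,T)$ is by construction the completion of simple processes whose coefficients are functionals of the one-dimensional $G'$-Brownian motion $\beta$; membership there is a measurability/approximability statement with respect to $\beta$, not an integrability statement, and it cannot be read off from estimates on $B^{x}$. The paper closes this point by a different mechanism: Theorem 3.6 of \cite{A2023GCIR} produces a nonnegative solution of (\ref{besq}) constructed from $\beta$ alone, hence lying in $\tilde{M}_{G}^{2}(0,T)\subset M_G^{2}(0,T)\subset M_G^{1}(0,T)$, and the uniqueness you have just proved identifies $Z$ with that solution; only then does $Z\in\tilde{M}_{G}^{2}(0,T)$ follow. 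You should replace your density/closure remark by this identification argument (or else give an independent proof that $Z$ is a limit of simple processes built from $\beta$-cylinder functions, which amounts to redoing that existence construction).
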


\begin{remark}
	As $Z$ is generated by $B^{x}=B+x$, the solution space for the equation is restricted to $M_{G}^{1}(0,T)$. Since $\beta $ is a one-dimensional $
	G'$-Brownian motion, we can define $\tilde{L}_{G}^{p}(\Omega ),\ \tilde{M}_{G}^{p}(0,T)$ in a similar way. From the fact that $\beta $ is
	generated by $B^{x}$, it follows that $\tilde{L}_{G}^{p}(\Omega )\subset
	L_{G}^{p}(\Omega ),\ \tilde{M}_{G}^{p}(0,T)\subset M_{G}^{p}(0,T)$.
\end{remark}

\begin{proof}
	Applying the It\^{o} formula to $Z_{t}$ on $ [0,T] $, we have
	\begin{equation}\label{Bx}
		Z_{t}=\left\vert B_{t}^{x}\right\vert ^{2}=\left\vert x\right\vert
		^{2}+2\sum_{i=1}^{d}\int_{0}^{t}\bar{B}_{s}^{(i)}\mathrm{d}B_{s}^{(i)}+\sum_{i=1}^{d}\langle B^{(i)}\rangle _{t},\quad t\in \left[ 0,T\right] .
	\end{equation}%
	According to Lemma \ref{beta}, $\beta $ is a one-dimensional $G'$-Brownian motion and $\langle \beta \rangle =\langle B^{(1)}\rangle $ q.s. Then equation (\ref{Bx}) turns to
	\begin{equation*}
		Z_{t}=z+2\int_{0}^{t}\sqrt{Z_{s}}\mathrm{d}\beta _{s}+d\left\langle \beta\right\rangle _{t},\quad t\in \left[ 0,T\right],
	\end{equation*}
	which implies that $ Z $ is the solution to equation (\ref{besq}). Let us now prove the uniqueness. Assume that $ Z'\in M_{G}^{1}[0,T] $ is another solution of equation (\ref{besq}). Note that $\beta _{t}-\beta _{s}$ is independent from $L_{G}^{1}(\Omega_{s}) $ for $0\leq s\leq t\leq T$. According to Proposition 3.2 of \cite{A2023GCIR}, we obtain $ Z=Z' $ q.s. From Theorem 3.6 of \cite{A2023GCIR}, we obtain that above equation has a solution belongs to $\tilde{M}_{G}^{2}(0,T)$. Due to $\tilde{M}_{G}^{2}(0,T)\subset M_{G}^{2}(0,T)\subset M_{G}^{1}(0,T)$, it follows that $Z\in \tilde{M}_{G}^{2}(0,T)$.
\end{proof}

The following lemma is the key for us to  study the properties of the squared $G$-Bessel process further.

\begin{lemma}\label{taub}
	Let $z>0$ and let $d\in \mathbb{N}^{+}$. Let $Z$ be the $G$-$\mathrm{BESQ}_{z}^{d}$. For any $b>z$, set $\tau _{b}=\inf \left\{ t\geq 0:Z_{t}\geq b\right\} $. Then for each fixed $ b $, $\lim_{t\uparrow\infty}\mathrm{c}(\{\tau _{b}>t\})=0 $. Moreover, $\lim_{b\uparrow\infty}\mathrm{c}(\{\tau _{b}<t\})=0 $  for $ t\geq0 $.
\end{lemma}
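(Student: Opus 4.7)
The plan is to exploit the decomposition $Z_t=z+M_t+d\langle\beta\rangle_t$ supplied by Proposition \ref{solution besq}, where $M_t:=2\int_0^t\sqrt{Z_s}\,\mathrm d\beta_s$ is a $G'$-martingale, together with the q.s.\ bounds $\underline\sigma^2 t\le\langle\beta\rangle_t\le\bar\sigma^2 t$ from condition (A) and the Doob maximal inequality in the $G$-framework (Corollary 4.11 of \cite{xu2010martingale}). The two assertions are handled by symmetric ideas: the first by a martingale stopped at $\tau_b$, the second by Doob's inequality applied to $B$ itself.

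For the first assertion I would introduce $\tilde M_t:=M_{t\wedge\tau_b}$, which is again a $G'$-martingale. Because $Z_s<b$ on $\{s<\tau_b\}$, its quadratic variation satisfies $\langle\tilde M\rangle_t = 4\int_0^{t\wedge\tau_b}Z_s\,\mathrm d\langle\beta\rangle_s\le 4 b\bar\sigma^2 t$ quasi-surely, whence $\hat{\mathbb E}[\tilde M_t^2]\le 4 b\bar\sigma^2 t$. On $\{\tau_b>t\}$ one has $\tilde M_t = M_t$ and $Z_t<b$, so the SDE forces
\[ \tilde M_t \;=\; Z_t-z-d\langle\beta\rangle_t \;<\; b-z-d\underline\sigma^2 t \quad\text{q.s.} \]
For $t$ large enough that $a(t):=d\underline\sigma^2 t-(b-z)>0$, Markov's inequality gives
\[ \mathrm c(\{\tau_b>t\}) \;\le\; \mathrm c(\{-\tilde M_t\ge a(t)\}) \;\le\; \frac{\hat{\mathbb E}[\tilde M_t^2]}{a(t)^2} \;\le\; \frac{4 b\bar\sigma^2 t}{(d\underline\sigma^2 t-(b-z))^2} \;\longrightarrow\; 0. \]

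For the second assertion I would use continuity of paths and $|B_s^x|\le|B_s|+\sqrt z$ to obtain
\[ \{\tau_b<t\}\;\subset\;\Big\{\sup_{s\le t}|B_s^x|^2\ge b\Big\}\;\subset\;\bigcup_{i=1}^d\Big\{\sup_{s\le t}|B_s^{(i)}|\ge(\sqrt b-\sqrt z)/\sqrt d\Big\} \]
for $b>z$. Applying Doob's inequality to each symmetric $G$-martingale $B^{(i)}$, together with $\hat{\mathbb E}[(B_t^{(i)})^2]=\bar\sigma^2 t$, then yields
\[ \mathrm c(\{\tau_b<t\}) \;\le\; \frac{d^2\bar\sigma^2 t}{(\sqrt b-\sqrt z)^2} \;\longrightarrow\; 0 \quad \text{as } b\uparrow\infty. \]

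The main technical point I expect to require care is the optional-stopping step in the first argument: identifying $\tilde M_t$ with $M_{t\wedge\tau_b}$ as a $G$-martingale and controlling $\langle\tilde M\rangle_t$ in the $\hat{\mathbb E}$-sense demands that $\tau_b$ behaves well under capacity, which is precisely the quasi-continuity of the stopping time that the introduction flags as a prerequisite for this section. Everything else is a routine combination of the SDE decomposition, the q.s.\ two-sided bounds on $\langle\beta\rangle$, and the standard maximal inequalities already used in Corollary \ref{cor3.4}.
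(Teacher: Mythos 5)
Your proposal is correct, and it diverges from the paper in an instructive way. For the first assertion you use the same ingredients as the paper (the stopped martingale $Y_{t\wedge\tau_b}=2\int_0^{t\wedge\tau_b}\sqrt{Z_s}\,\mathrm{d}\beta_s$, the bound $\hat{\mathbb{E}}[Y_{t\wedge\tau_b}^2]\le 4b\bar\sigma^2 t$, and the q.s.\ lower bound $\langle\beta\rangle_t\ge\underline\sigma^2 t$), but you turn the estimate into a direct Chebyshev bound $\mathrm{c}(\{\tau_b>t\})\le 4b\bar\sigma^2 t/(d\underline\sigma^2 t-(b-z))^2$, whereas the paper argues by contradiction with the auxiliary threshold $t^{3/4}$ and the sets $F_n$; your version is shorter and even gives an explicit decay rate, and it only requires the same prerequisites the paper invokes (quasi-continuity of $\tau_b\wedge t$ via Lemma 4.3 of \cite{song2021grad} and the symmetric-martingale property of the stopped integral via Corollary 4.10 of \cite{liu2020exit}, which you correctly flag as the delicate step). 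For the second assertion your route is genuinely different: the paper again works with the unstopped martingale $Y$ from the SDE, bounds $\hat{\mathbb{E}}[Y_{t_0}^2]$ via $\hat{\mathbb{E}}[Z_s]\le z+d\bar\sigma^2 s$, and concludes by contradiction using the maximal inequality of Corollary 4.11 of \cite{xu2010martingale}; you instead bypass the SDE entirely, use the definition $Z=|B^x|^2$ together with $|B^x_s|\le|B_s|+\sqrt z$, and apply the same maximal inequality coordinatewise to $B^{(i)}$, giving $\mathrm{c}(\{\tau_b<t\})\le d^2\bar\sigma^2 t/(\sqrt b-\sqrt z)^2$. Your argument is more elementary (no stochastic-integral moment estimate needed, no contradiction), while the paper's SDE-based argument has the advantage of relying only on equation (\ref{besq}) and hence would carry over to processes defined as solutions of that equation rather than as $|B^x|^2$; both yield the stated limits.
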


\begin{proof}
	From Lemma 4.3 of \cite{song2021grad}, we get $\tau _{b}\wedge t$ is a
	quasi-continuous random variable. According to equation (\ref{besq}), we have
	\begin{equation*}
		Z_{\tau _{b}\wedge t}=z+2\int_{0}^{\tau _{b}\wedge t}\sqrt{Z_{s}}\mathrm{d}\beta_{s}+d\left\langle \beta \right\rangle _{\tau _{b}\wedge t}, \quad t\geq0.
	\end{equation*}%
	Denote $Y_{\tau _{b}\wedge t}=\int_{0}^{\tau _{b}\wedge t}2\sqrt{Z_{s}}%
	\mathrm{d}\beta _{s}$. By Corollary 4.10 of \cite{liu2020exit}, we know that $Y_{\tau_{b}\wedge t},t\geq 0$ is a symmetric martingale. From the definition of $\tau _{b}$, we obtain
	\begin{align*}
		\hat{\mathbb{E}}\left[ \left\vert Y_{\tau _{b}\wedge t}\right\vert ^{2}\right] ={}&\hat{\mathbb{E}}\left[ \int_{0}^{\tau _{b}\wedge
			t}4Z_{s}\mathrm{d}\left\langle \beta \right\rangle _{s}\right]\\
		 \leq{}& 4b\bar{\sigma}^{2}t.
	\end{align*}
	According to Lemma 13 of \cite{denis2011function}, we get
	\begin{align}\label{Ftc}
		\mathrm{c}\left( \left\{  |Y_{\tau _{b}\wedge t}| >t^{\frac{3}{4}}\right\} \right) \leq{}& \frac{\hat{\mathbb{E}}\left[ \left\vert Y_{\tau _{b}\wedge t}\right\vert ^{2}\right] }{t^{\frac{3}{2}}}\notag\\
		\leq{}& \frac{4b\bar{\sigma}^{2}}{\sqrt{t}}\rightarrow 0 \quad \text{ as } t\rightarrow \infty.
	\end{align}
    Here, we employ a proof by contradiction to establish that $ \lim_{t\uparrow\infty}\mathrm{c}(\{\tau _{b}>t\})=0 $. Suppose there exists $ \epsilon>0 $ such that, for any $ n\in \mathbb{N}^{+} $, $ \mathrm{c}(\{\tau _{b}>n\})\geq\epsilon $. Then for each $ \omega\in \{\tau _{b}>n\} $, $\tau _{b}(\omega)\wedge n=n$. Define $ F_{n}=\{ \omega:|Y_{\tau _{b}(\omega)\wedge n}|\leq n^{\frac{3}{4}} \} $. From (\ref{Ftc}), we can find $ n' $ such that $ \mathrm{c}(F_{n}^{c})\leq \frac{\epsilon}{2} $ whenever $ n\geq n' $. By subadditivity and monotonicity of the capacity, we obtain 
    \begin{align*}
    	\mathrm{c}(\{\tau _{b}>n\})\leq{}&\mathrm{c}(\{\tau _{b}>n\}\cap F_{n})+\mathrm{c}(\{\tau _{b}>n\}\cap F_{n}^{c})\notag \\
    	\leq{}&\mathrm{c}(\{\tau _{b}>n\}\cap F_{n})+\frac{\epsilon}{2}.
    \end{align*}
    Due to $ \mathrm{c}(\{\tau _{b}>n\})\geq\epsilon $, we get $ \mathrm{c}(\{\tau _{b}>n\}\cap F_{n})\geq \frac{\epsilon}{2} $. For each $ \omega\in \{\tau _{b}>n\}\cap F_{n} $, we have
	\begin{equation*}
		b>Z_{n}(\omega)=z+Y_{n}(\omega)+d\left\langle \beta \right\rangle _{n}(\omega)\geq z+d\underline{\sigma}^{2}n-n^{\frac{3}{4}}.
	\end{equation*}
    For each given $ b $, choose $ m\geq n' $ such that $ z+d\bar{\sigma}^{2}m-m^{\frac{3}{4}}>b $, then $ \{b>z+d\underline{\sigma}^{2}m-m^{\frac{3}{4}} \}=\emptyset $.
    Therefore, 
    \begin{align*}
    	\mathrm{c}(\{\tau _{b}>m\}\cap F_{m})\leq\mathrm{c}(\{b>z+d\underline{\sigma}^{2}m-m^{\frac{3}{4}} \})=0 .
    \end{align*}
	This gives a contradiction. Thus we obtain $\lim_{t\uparrow\infty}\mathrm{c}(\{\tau _{b}>t\})=0 $.
	
	Next, we show that $\lim_{b\uparrow\infty}\mathrm{c}(\{\tau _{b}<t\})=0 $. We proceed with the proof by contradiction. Obviously $\tau _{b}$ is nondecreasing. Suppose there exists $t_{0},\epsilon_{0}>0 $ such that, for any $ b $, $ \mathrm{c}\left( \left\lbrace \tau _{b}<t_{0}\right\rbrace \right) \geq \epsilon_{0} $. For each $ \omega\in \left\lbrace \tau _{b}<t_{0}\right\rbrace $,
	\begin{align*}
		b=Z_{\tau_{b}\wedge t_{0}}(\omega )={}&z+Y_{\tau_{b}}(\omega )+d\left\langle \beta\right\rangle _{\tau_{b}}(\omega )\\
		\leq{}&z+\sup_{t\in[0,t_{0}]}|Y_{t}|(\omega)+d\bar{\sigma}^{2}t_{0}.
	\end{align*}
    Therefore, 
    \begin{align*}
    	\mathrm{c}\left( \left\lbrace  \sup_{t\in[0,t_{0}]}|Y_{t}|\geq b-z-d\bar{\sigma}^{2}t_{0} \right\rbrace \right) \geq\mathrm{c}\left( \left\lbrace \tau _{b}<t_{0}\right\rbrace \right) \geq \epsilon_{0}.
    \end{align*}
    According to (\ref{besq}), we yield
    \begin{align}\label{yt0}
    	\hat{\mathbb{E}}\left[ Y_{t_{0}}^{2}\right]\leq{}&4\bar{\sigma}^{2}\hat{\mathbb{E}}\left[ \int_{0}^{t_{0}}Z_{s}\mathrm{d}s\right] \notag\\
    	\leq{}&4\bar{\sigma}^{2}\int_{0}^{t_{0}}\hat{\mathbb{E}}\left[ Z_{s}\right]\mathrm{d}s\notag\\
    	\leq{}&4\bar{\sigma}^{2}t_{0}\left( z+d\bar{\sigma}^{2}t_{0}\right).
    \end{align}
    From Corollary 4.11 of \cite{xu2010martingale} and (\ref{yt0}), it follows that
    \begin{align*}
    	\mathrm{c}\left( \left\lbrace  \sup_{t\in[0,t_{0}]}|Y_{t}|\geq b-z-d\bar{\sigma}^{2}t_{0} \right\rbrace \right)
    	\leq {}&\frac{\hat{\mathbb{E}}\left[Y_{t_{0}}^{2} \right] }{\left(b-z-d\bar{\sigma}^{2}t_{0} \right)^{2} }\\
    	\leq{}&\frac{4\bar{\sigma}^{2}t_{0}\left(z+d\bar{\sigma}^{2}t_{0} \right) }{\left(b-z-d\bar{\sigma}^{2}t_{0} \right)^{2} }\rightarrow0\quad \text{ as }b\rightarrow\infty.
    \end{align*}
	This presents a contradiction, and thus we conclude that $ \lim_{b\uparrow\infty}\mathrm{c}(\{\tau _{b}<t\})=0  $.
\end{proof}

\begin{remark}\label{remtaub}
	 It should be noted that, in the capacity sense, the conclusion of the above lemma is not equivalent to $ \tau_{b}<\infty $ q.s. and $ \lim_{b\uparrow\infty}\tau_{b}=\infty $ q.s. Strictly speaking, the conclusion of the above lemma can lead to $ \tau_{b}<\infty $ q.s. and $ \lim_{b\uparrow\infty}\tau_{b}=\infty $ q.s., but the converse implication does not hold. For example,  
	\[ \mathrm{c}\left(\left\lbrace \tau_{b}=\infty \right\rbrace \right)\leq \mathrm{c}\left(\left\lbrace \tau_{b}>t \right\rbrace \right). \]
	Then, taking the limit of both sides with respect to $ t $, we have 
	\[ \mathrm{c}\left(\left\lbrace \tau_{b}=\infty \right\rbrace \right)\leq\lim_{t\uparrow \infty }\mathrm{c}\left(\left\lbrace \tau_{b}>t \right\rbrace \right)=0. \]
	Thus we get $ \tau_{b}<\infty $ q.s. However, since capacity differs from probability and does not satisfy upper continuity, we cannot obtain $ \lim_{t\uparrow \infty }\mathrm{c}\left(\left\lbrace \tau_{b}>t \right\rbrace \right)=0 $ through $ \tau_{b}<\infty $ q.s. 
\end{remark}

Here we present the main result of this section.

\begin{theorem}\label{path}
	Let $z>0$ and let $d\in \mathbb{N}^{+}$. Let $Z$ be the $G$-$\mathrm{BESQ}_{z}^{d}$. For each $a\in (0,z)$, define $\tau _{a}=\inf \left\{ t\geq0:Z_{t}\leq a\right\} $ and $ \tau_{0}=\lim_{a\downarrow0}\tau_{a} $. Then 
	\begin{itemize}
		\item[(1)] If $d\geq 2$, $\lim_{a\downarrow0}\mathrm{c}\left( \left\{ \tau _{a}<t\right\} \right)=0$ for $ t\geq0 $;
		\item[(2)] If $d=2$, $\lim_{t\uparrow\infty}\mathrm{c}\left( \left\{ \tau_{a}>t \right\} \right)=0$;
		\item[(3)] If $d=1$, $\lim_{t\uparrow\infty}\mathrm{c}\left( \left\{ \tau_{0}>t \right\} \right)=0$;
		\item[(4)] If $d>2$, $\mathrm{c}\left( \left\{ \inf_{t\geq 0}Z_{t}=0\right\} \right)=0$.
	\end{itemize}
\end{theorem}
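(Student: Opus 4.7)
The plan is, for each $0<a<z<b$ and each dimensional regime, to build two symmetric bounded martingales on $[a,b]$ taking the values $0$ and $1$ at the endpoints and then read off capacity bounds from the optional stopping identity. Applying It\^{o}'s formula to $f(Z_t)$ via (\ref{besq}) and $d\langle Z\rangle_t = 4Z_t\, d\langle\beta\rangle_t$ produces a $d\langle\beta\rangle$-drift $[df'(z)+2zf''(z)]\,d\langle\beta\rangle_t$. The scale choices $f_1(z)=\sqrt z$ for $d=1$, $f_2(z)=\log z$ for $d=2$, and $f_d(z)=z^{1-d/2}$ for $d>2$ annihilate this drift, so $f_d(Z_{\cdot\wedge\tau_a\wedge\tau_b})$ is a bounded symmetric $\beta$-martingale. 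Normalizing affinely gives $\phi$ and $\varphi=1-\phi$ with $\phi(a)=1,\phi(b)=0$ and $\varphi(a)=0,\varphi(b)=1$; these are the two martingales alluded to in the introduction. Since the stochastic integrand is bounded on $[a,b]$ and $\tau_a,\tau_b$ are quasi-continuous (Lemma~4.3 of \cite{song2021grad}), optional stopping at $\sigma=\tau_a\wedge\tau_b\wedge t$ yields $\hat{\mathbb{E}}[\phi(Z_\sigma)]=\phi(z)$ and $\hat{\mathbb{E}}[\varphi(Z_\sigma)]=\varphi(z)$.

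The inequality $\phi(Z_\sigma)\geq \mathbf{1}_{\{\tau_a<\tau_b\wedge t\}}$, together with the analogous one for $\varphi$, gives $\mathrm{c}(\{\tau_a<\tau_b\wedge t\})\leq\phi(z)$ and $\mathrm{c}(\{\tau_b<\tau_a\wedge t\})\leq\varphi(z)$. For (1), I split $\{\tau_a<t\}\subset\{\tau_a<\tau_b\wedge t\}\cup\{\tau_b\leq t\}$ and invoke the second half of Lemma~\ref{taub} to obtain $\mathrm{c}(\{\tau_a<t\})\leq\phi(z)+\mathrm{c}(\{\tau_b\leq t\})$; sending $a\downarrow 0$ drives $\phi(z)\to 0$ (both in the $\log$ formula for $d=2$ and in the power formula for $d>2$), after which $b\uparrow\infty$ kills the residue. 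Part (2) follows by the mirror decomposition $\{\tau_a>t\}\subset\{\tau_b<\tau_a\wedge t\}\cup\{\tau_b>t\}$, applying the first half of Lemma~\ref{taub} to the second piece and using $\varphi(z)\to 0$ as $b\uparrow\infty$ with $d=2$.

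For (3), the same decomposition with $d=1$ yields $\mathrm{c}(\{\tau_a>t\})\leq\varphi(z)+\mathrm{c}(\{\tau_b>t\})$; the explicit form $\varphi(z)=(\sqrt z-\sqrt a)/(\sqrt b-\sqrt a)$ is monotone in $a$ with $\sup_{a\in(0,z)}\varphi(z)=\sqrt z/\sqrt b$, so the lower continuity of $\mathrm{c}$ along the increasing union $\{\tau_0>t\}=\bigcup_{a>0}\{\tau_a>t\}$ upgrades the bound to $\mathrm{c}(\{\tau_0>t\})\leq \sqrt z/\sqrt b+\mathrm{c}(\{\tau_b>t\})$, after which $t\uparrow\infty$ and then $b\uparrow\infty$ finish the proof. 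For (4), combining Lemma~\ref{taub} with countable subadditivity yields $\lim_{b\uparrow\infty}\tau_b=\infty$ q.s., so the sets $\{\tau_a<\tau_b\}$ increase q.s.\ to $\{\tau_a<\infty\}$; lower continuity in $b$ turns the bound $\mathrm{c}(\{\tau_a<\tau_b\})\leq\phi(z)$ (obtained by sending $t\uparrow\infty$) into $\mathrm{c}(\{\tau_a<\infty\})\leq (a/z)^{d/2-1}$, which vanishes as $a\downarrow 0$ since $d/2-1>0$. The conclusion $\mathrm{c}(\{\inf_{t\geq 0}Z_t=0\})=0$ follows from $\{\inf_{t\geq 0}Z_t=0\}\subset\{\tau_a<\infty\}$ for every $a>0$.

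The main technical obstacle is the one-sided nature of capacity continuity: lower continuity along increasing unions is available, but upper continuity along decreasing intersections fails, so every limit in $a$, $b$ and $t$ has to be taken in an order that respects this monotonicity. In particular, for (3) one cannot pass $a\downarrow 0$ inside $\mathrm{c}(\{\tau_a>t\})$; the argument must first produce a bound uniform in $a$ and then apply lower continuity to the increasing union, and an analogous uniform-in-$b$ control is required in (4) to convert the hitting-before-$\tau_b$ bound into an honest estimate of $\mathrm{c}(\{\tau_a<\infty\})$.
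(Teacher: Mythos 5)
Your proposal is correct and follows essentially the same route as the paper: the same scale-function martingales built from $y^{1-d/2}$ (resp.\ $\ln y$ for $d=2$) stopped at $\tau_a\wedge\tau_b\wedge t$, combined with the quasi-continuity of the stopping times, Lemma \ref{taub}, subadditivity, and continuity of $\mathrm{c}$ from below for the increasing unions in (3) and (4). The only cosmetic differences are that you settle for the one-sided bounds $\mathrm{c}(\{\tau_a<\tau_b\wedge t\})\le\phi(z)$ rather than the paper's exact exit capacities (\ref{b<a})--(\ref{a<b}), you decouple the limits $a\downarrow0$ and then $b\uparrow\infty$ in (1) where the paper takes the coupled choice $a=k^{-k}$, $b=k$, and in (4) you replace the Borel--Cantelli step by the simple inclusion $\{\inf_{t\geq0}Z_{t}=0\}\subset\{\tau_a<\infty\}$ — all of which still yield the stated conclusions.
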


\begin{proof}
	Obviously, when $ t\leq\tau_{a} $, $ (Z_{t})_{t\leq\tau_{a}} $ satisfies the condition ($ H' $) in Remark 3.5 of \cite{liu2020exit}. According to Corollary 3.7 of \cite{liu2020exit}, for $ t\geq0 $, $\tau _{a}\wedge t$ is a quasi-continuous random variable. $ \tau_{b} $ is defined as in Lemma \ref{taub}. Similarly, we get $ t\wedge\tau_{a}\wedge\tau_{b} $ is a quasi-continuous random variable. Thus, 
	\begin{align}
			Z_{t\wedge\tau_{a}\wedge\tau_{b}}={}&z+2\int_{0}^{t\wedge\tau_{a}\wedge\tau_{b}}\sqrt{Z_{s}}\mathrm{d}\beta_{s}+d\left\langle \beta\right\rangle _{t\wedge\tau_{a}\wedge\tau_{b}}\notag\\ 
			={}&z+2\int_{0}^{t}\sqrt{Z_{s}}I_{[0,\tau_{a}\wedge\tau_{b}]}(s)\mathrm{d}\beta_{s}+d\int_{0}^{t}I_{[0,\tau_{a}\wedge\tau_{b}]}(s)\mathrm{d}\left\langle \beta\right\rangle _{s},\quad t\in[0,T].
	\end{align}
	
	For $ y\in[a,b] $, set
	\begin{align*}
		\phi(y)=
		\begin{cases}
			\frac{y^{1-\frac{d}{2}}-a^{1-\frac{d}{2}}}{b^{1-\frac{d}{2}}-a^{1-\frac{d}{2}}},\quad d\neq 2,\\
			\frac{\ln ^{y}-\ln ^{a}}{\ln ^{b}-\ln ^{a}},\quad d=2,
		\end{cases}
	\end{align*}
	and
	\begin{align*}
		\varphi(y)=
		\begin{cases}
			\frac{b^{1-\frac{d}{2}}-y^{1-\frac{d}{2}}}{b^{1-\frac{d}{2}}-a^{1-\frac{d}{2}}},\quad d\neq 2,\\
			\frac{\ln ^{b}-\ln ^{y}}{\ln ^{b}-\ln ^{a}},\quad d=2.
		\end{cases}
	\end{align*}
	Applying the It\^{o} formula to $ \phi(Z_{t\wedge\tau_{a}\wedge\tau_{b}}) $, we get
	\begin{align}\label{phiz}
		\phi(Z_{t\wedge\tau_{a}\wedge\tau_{b}})=
		\begin{cases}
			\frac{z^{1-\frac{d}{2}}-a^{1-\frac{d}{2}}}{b^{1-\frac{d}{2}}-a^{1-\frac{d}{2}}}+\frac{(2-d)}{b^{1-\frac{d}{2}}-a^{1-\frac{d}{2}}}\int_{0}^{t}Z_{s}^{\frac{1-d}{2}}I_{[0,\tau_{a}\wedge\tau_{b}]}(s)\mathrm{d}\beta_{s},\quad d\neq2,\\
			\frac{\ln^{z}-\ln ^{a}}{\ln ^{b}-\ln ^{a}}+\frac{2}{\ln ^{b}-\ln ^{a}}\int_{0}^{t}Z_{s}^{-\frac{1}{2}}I_{[0,\tau_{a}\wedge\tau_{b}]}(s)\mathrm{d}\beta_{s},\quad d=2.
		\end{cases}
	\end{align}
	Observe that $ \varphi=1-\phi $. Thus we yield
	\begin{align}\label{varphiz}
		\varphi(Z_{t\wedge\tau_{a}\wedge\tau_{b}})=
		\begin{cases}
			\frac{b^{1-\frac{d}{2}}-z^{1-\frac{d}{2}}}{b^{1-\frac{d}{2}}-a^{1-\frac{d}{2}}}+\frac{(d-2)}{b^{1-\frac{d}{2}}-a^{1-\frac{d}{2}}}\int_{0}^{t}Z_{s}^{\frac{1-d}{2}}I_{[0,\tau_{a}\wedge\tau_{b}]}(s)\mathrm{d}\beta_{s},\quad d\neq2,\\
			\frac{\ln^{b}-\ln ^{z}}{\ln ^{b}-\ln ^{a}}-\frac{2}{\ln ^{b}-\ln ^{a}}\int_{0}^{t}Z_{s}^{-\frac{1}{2}}I_{[0,\tau_{a}\wedge\tau_{b}]}(s)\mathrm{d}\beta_{s},\quad d=2.
		\end{cases}
	\end{align}
    Equations (\ref{phiz}) and (\ref{varphiz}) indicate that $ \phi(Z_{t\wedge\tau_{a}\wedge\tau_{b}}),\ t\geq0 $ and $ \varphi(Z_{t\wedge\tau_{a}\wedge\tau_{b}}),\ t\geq0 $ are symmetric martingales. 
    
    Notice that $ \phi (Z_{t\wedge \tau _{a}\wedge \tau _{b}})\in[0,1] $. According to Lemma \ref{taub}, we have
    \begin{align*}
    	{}&\lim_{t\uparrow \infty}\hat{\mathbb{E}}\left[\left| \phi (Z_{t\wedge \tau _{a}\wedge \tau _{b}})-\phi (Z_{t\wedge \tau _{a}\wedge \tau _{b}})I_{\{ \tau_{b}\leq t \}}\right|  \right]\\
    	\leq{}& \lim_{t\uparrow \infty}\hat{\mathbb{E}}\left[\phi (Z_{t\wedge \tau _{a}\wedge \tau _{b}})I_{\{ \tau_{b}>t \}} \right]\notag\\
    	\leq{}&\lim_{t\uparrow \infty}\mathrm{c}(\{\tau _{b}>t\})\\
    	={}&0, 
    \end{align*}
    which implies $ \lim_{t\uparrow \infty}\hat{\mathbb{E}}\left[\phi (Z_{t\wedge \tau _{a}\wedge \tau _{b}}) \right]=\lim_{t\uparrow \infty}\hat{\mathbb{E}}\left[\phi (Z_{t\wedge \tau _{a}\wedge \tau _{b}})I_{\{ \tau_{b}\leq t \}} \right] $. 
    Then we obtain
	\begin{align*}
		\lim_{t\uparrow \infty}\hat{\mathbb{E}}\left[ \phi (Z_{t\wedge \tau _{a}\wedge \tau _{b}})\right] ={}&\lim_{t\uparrow \infty}\hat{\mathbb{E}}\left[ \phi (Z_{t\wedge \tau _{a}\wedge \tau _{b}})I_{\{ \tau_{b}\leq t \}}\right]\notag \\
		={}&\lim_{t\uparrow \infty}\hat{\mathbb{E}}\left[ \phi (Z_{\tau _{a}\wedge \tau _{b}})I_{\{\tau_{b}\leq t \}}\right]\notag\\
		={}&\lim_{t\uparrow \infty}\hat{\mathbb{E}}\left[\phi (Z_{\tau _{a}})I_{\{\tau_{a}\geq\tau_{b}\}}I_{\{\tau_{b}\leq t \}}+\phi (Z_{\tau _{b}})I_{\{\tau_{b}<\tau_{a}\}}I_{\{\tau_{b}\leq t \}}   \right]  \\
		={}&\lim_{t\uparrow \infty}\mathrm{c}\left( \left\{ \tau _{b}<\tau _{a},\tau_{b}\leq t\right\} \right)\\
		={}&\mathrm{c}\left( \left\{ \tau _{b}<\tau _{a}\right\} \right).
	\end{align*}
	By the property of martingale, we get 
	\begin{equation*}
		\phi (z)=\hat{\mathbb{E}}\left[ \phi (Z_{0})\right] =\hat{\mathbb{E}}\left[ \phi (Z_{t\wedge\tau _{a}\wedge \tau _{b}})\right] .
	\end{equation*}
	Taking the limit of $ t $ on both sides, we yield
	\begin{equation*}
		\phi (z)=\lim_{t\uparrow \infty}\hat{\mathbb{E}}\left[ \phi (Z_{t\wedge \tau _{a}\wedge \tau _{b}})\right]=\mathrm{c}\left( \left\{\tau _{b}<\tau _{a}\right\} \right) .
	\end{equation*}
	Therefore,
	\begin{align}\label{b<a}
		\mathrm{c}\left( \left\{ \tau _{b}<\tau _{a}\right\} \right) =
		\begin{cases}
			\frac{z^{1-\frac{d}{2}}-a^{1-\frac{d}{2}}}{b^{1-\frac{d}{2}}-a ^{1-\frac{d}{2}}},\quad d\neq 2,\\
			\frac{\ln^{z}-\ln ^{a}}{\ln ^{b}-\ln ^{a}},\quad d=2.
		\end{cases}
	\end{align}%
    Similarly, it follows that
    \begin{align}\label{a<b}
    	\mathrm{c}\left( \left\{ \tau _{a}<\tau _{b}\right\} \right) =
    	\begin{cases}
    		\frac{b^{1-\frac{d}{2}}-z^{1-\frac{d}{2}}}{b^{1-\frac{d}{2}}-a ^{1-\frac{d}{2}}},\quad d\neq 2,\\
    		\frac{\ln^{b}-\ln ^{z}}{\ln ^{b}-\ln ^{a}},\quad d=2.
    	\end{cases}
    \end{align}

	In the following, we proceed to prove four conclusions of this theorem separately.
	
	(1): Let us first prove that when $ d=2 $, $ \lim_{a\downarrow0}\mathrm{c}\left( \left\{ \tau _{a}<t\right\} \right) =0 $ for $ t\geq0 $. Consider that the constant $ k $ satisfies $ k^{-k}<z$ and $k>z $. Then we can take $ a=k^{-k},b=k $. Based on subadditivity and monotonicity of the capacity and (\ref{a<b}), we have
	\begin{align}
		 \lim_{a\downarrow0}\mathrm{c}\left( \left\{ \tau _{a}<t\right\} \right)\leq{}&\lim_{a\downarrow0}\mathrm{c}\left( \left\{ \tau _{a}<t,\tau_{a}<\tau_{b}\right\}\right)+\lim_{a\downarrow0}\mathrm{c}\left( \left\{ \tau _{a}<t,\tau_{a}\geq\tau_{b}\right\} \right)\notag\\
		 \leq{}&\lim_{a\downarrow0}\mathrm{c}\left( \left\{ \tau _{a}<\tau_{b}\right\} \right)+\lim_{a\downarrow0}\mathrm{c}\left( \left\{ \tau _{b}<t\right\} \right)\notag\\
		 ={}&\lim_{k\uparrow\infty}\frac{\ln^{k}-\ln^{z}}{(k+1)\ln^{k}}+\lim_{k\uparrow\infty}\mathrm{c}\left( \left\{ \tau _{k}<t\right\} \right).
	\end{align}
	Combined with Lemma \ref{taub}, we know that $  \lim_{a\downarrow0}\mathrm{c}\left( \left\{ \tau _{a}<t\right\} \right)=0 $. When $ d>2 $, it can be proved similarly.
	
	(2): When $d=2$, we obtain
	\begin{align*}
		\lim_{t\uparrow\infty}\mathrm{c}\left( \left\{ \tau_{a}>t \right\} \right)\leq{}&\lim_{t\uparrow\infty}\mathrm{c}\left( \left\{\tau_{a}>t, \tau_{a}\leq\tau_{b} \right\} \right)+\lim_{t\uparrow\infty}\mathrm{c}\left( \left\{\tau_{a}>t, \tau_{b}<\tau_{a} \right\} \right)\\
		\leq{}&\lim_{t\uparrow\infty}\mathrm{c}\left( \left\{\tau_{b}>t \right\} \right) +\mathrm{c}\left( \left\{ \tau_{b}<\tau_{a} \right\} \right).
	\end{align*}
    By Lemma \ref{taub} and (\ref{b<a}), 
    \begin{align*}
    	\lim_{t\uparrow\infty}\mathrm{c}\left( \left\{ \tau_{a}>t \right\} \right)\leq{}&\lim_{t\uparrow\infty}\mathrm{c}\left( \left\{\tau_{b}>t \right\} \right) + \frac{\ln^{z}-\ln ^{a}}{\ln ^{b}-\ln ^{a}}\\
    	={}&\frac{\ln^{z}-\ln ^{a}}{\ln ^{b}-\ln ^{a}}\rightarrow0 \quad  \text{ as }b\uparrow\infty.
    \end{align*}
    
	(3): It is clear that $\{\tau_{b}<\tau_{a} \}\uparrow\{\tau_{b}<\tau_{0} \} $ as $ a\downarrow0 $, then we get $ \lim_{a\downarrow0}\mathrm{c}(\{\tau_{b}<\tau_{a}\})=\mathrm{c}(\{\tau_{b}<\tau_{0}\}) $. When $d=1$, by (\ref{b<a}), it leads to 
	\begin{align*}
		\mathrm{c}(\left\{ \tau _{b}<\tau_{0} \right\})={}&\lim_{a\rightarrow0}\mathrm{c}(\left\{ \tau _{b}<\tau_{a} \right\})\\
		={}&(\frac{z}{b})^{\frac{1}{2}}.
	\end{align*}
	Using the same argument in the proof of (2), we obtain that 
	\begin{align*}
		\lim_{t\uparrow\infty}\mathrm{c}\left( \left\{ \tau_{0}>t \right\} \right)\leq{}&\lim_{t\uparrow\infty}\mathrm{c}\left( \left\{ \tau_{b}>t \right\} \right)+\mathrm{c}\left( \left\{ \tau_{b}<\tau_{0} \right\} \right)\\
		={}&\left( \frac{z}{b}\right)^{\frac{1}{2}}\rightarrow0 \quad  \text{ as }b\uparrow\infty.
	\end{align*}
	
	(4): From Remark \ref{remtaub}, we know $ \lim_{b\uparrow\infty}\tau_{b}=\infty $ q.s. Since $ \{\tau_{a}<\tau_{b} \}\uparrow \{\tau_{a}<\infty \}$ as $ b\uparrow\infty $, we get $ \lim_{b\rightarrow\infty}\mathrm{c}(\{\tau_{a}<\tau_{b} \})=\mathrm{c}(\{ \tau_{a}<\infty \}) $. When $ d>2 $, let $ b\rightarrow\infty $ in (\ref{a<b}), it holds that $\mathrm{c}\left( \left\{ \tau _{a}<\infty \right\} \right) =\left( \frac{a}{z}\right)^{\frac{d}{2}-1}$. Define $a_{n}=2^{-n}$ for $ n\in\mathbb{N} $ such that $2^{-n}<z$. Therefore, 
	\begin{equation*}
		\sum_{n=([-\frac{\ln^{z}}{\ln^{2}}]+1)\vee1}^{\infty }\mathrm{c}\left( \left\{ \tau _{a_{n}}<\infty
		\right\} \right) =\sum_{n=([-\frac{\ln^{z}}{\ln^{2}}]+1)\vee1}^{\infty }\left( \frac{2^{-n}}{z}%
		\right) ^{\frac{d}{2}-1}<\infty .
	\end{equation*}%
	By Lemma 5 of \cite{denis2011function}, we obtain 
	\begin{equation*}
		\mathrm{c}\left( \limsup_{n\rightarrow \infty }\left\{ \tau _{a_{n}}<\infty\right\} \right) =0.
	\end{equation*}%
	Then we have $\mathrm{c}\left( \left\{ \inf_{t\geq 0}X_{t}=0\right\} \right)=0$.
\end{proof}

\begin{remark}
	It is worth emphasizing that conclusions (1)–(3) of Theorem \ref{path} are more profound in the capacity sense. In fact, from (1)–(3), we can derive the following conclusions (1')–(3'), 
	\begin{itemize}
		\item[(1')] If $d\geq 2$, $\mathrm{c}\left( \left\{ \tau _{0}<\infty\right\} \right)=0$;
		\item[(2')] If $d=2$, $\mathrm{c}\left( \left\{ \tau_{a}=\infty \right\} \right)=0$;
		\item[(3')] If $d=1$, $\mathrm{c}\left( \left\{ \tau_{0}=\infty \right\} \right)=0$,
	\end{itemize}
    which are closer in form to the classical probabilistic results. However, due to the lack of upper continuity of capacity, the converse implication does not hold. 
\end{remark}

By the above theorem, we can obtain the equation satisfied by the $G$-Bessel
process.

\begin{proposition}
	Let $T\geq0,\ z>0$ and let $d\geq 2$.\ Let $Z$ be a $G$-$\mathrm{BESQ}_{z}^{d}$. Then $\sqrt{Z}$ is a $G$-$\mathrm{BES}_{\sqrt{z}}^{d}$ and satisfies the following equation:
	\begin{equation}\label{bes}
		X_{t}=\sqrt{z}+\frac{d-1}{2}\int_{0}^{t}\frac{1}{X_{s}}I_{\{X_{s}\neq0 \}}\mathrm{d}\left\langle\beta \right\rangle _{s}+\beta _{t},\quad t\in \left[ 0,T\right] .
	\end{equation}%
	Moreover, $\sqrt{Z}\in \tilde{M}_{G}^{2}(0,T)$ is the unique non-negative solution of equation (\ref{bes}) in $ M_{G}^{2}(0,T) $.
\end{proposition}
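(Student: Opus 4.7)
The plan is to split the proposition into three parts: (i) $\sqrt{Z}$ is a $G$-$\mathrm{BES}_{\sqrt{z}}^{d}$, (ii) it satisfies (\ref{bes}), and (iii) uniqueness in $M_{G}^{2}(0,T)$. Part (i) is immediate from Definition \ref{def bes} since $\sqrt{Z_{t}}=|B_{t}^{x}|$, and $\sqrt{Z}\in\tilde{M}_{G}^{2}(0,T)$ follows from the bound $\sqrt{Z_{t}}\leq 1+Z_{t}$ together with $Z\in\tilde{M}_{G}^{2}(0,T)$ supplied by Proposition \ref{solution besq}.

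For (ii), I would apply It\^o's formula to $f(y)=\sqrt{y}$ on the stopped process $Z_{\cdot\wedge\tau_{a}\wedge\tau_{b}}$, where $\tau_{a}=\inf\{t\geq 0:Z_{t}\leq a\}$ and $\tau_{b}$ is as in Lemma \ref{taub}, with $0<a<z<b$ chosen so that $Z\in[a,b]$ on this interval and $f$ is smooth there. Substituting $dZ_{s}=2\sqrt{Z_{s}}\,d\beta_{s}+d\cdot d\langle\beta\rangle_{s}$ and $d\langle Z\rangle_{s}=4Z_{s}\,d\langle\beta\rangle_{s}$, the $d/(2\sqrt{Z_{s}})$ contribution from $dZ_{s}$ and the $-1/(2\sqrt{Z_{s}})$ contribution from the second-order term combine to give $(d-1)/(2\sqrt{Z_{s}})$, yielding (\ref{bes}) stopped at $\tau_{a}\wedge\tau_{b}$. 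Letting $a\downarrow 0$ and $b\uparrow\infty$, Theorem \ref{path}(1) (available because $d\geq 2$) and Lemma \ref{taub} give $\mathrm{c}(\{\tau_{a}\wedge\tau_{b}<t\})\to 0$, so on the complement, which has capacity tending to $1$, the stopped identity becomes the full identity. Since $d\geq 2$ forces $Z>0$ q.s.\ by conclusion (1') in the remark after Theorem \ref{path}, inserting $I_{\{X_{s}\neq 0\}}$ leaves the integral unchanged.

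For (iii), let $X\in M_{G}^{2}(0,T)$ be any nonnegative solution of (\ref{bes}). Applying It\^o to $X_{t}^{2}$, and using $d\langle X\rangle_{s}=d\langle\beta\rangle_{s}$, one obtains
\[
X_{t}^{2}=z+2\int_{0}^{t}X_{s}\,d\beta_{s}+d\,\langle\beta\rangle_{t}-\int_{0}^{t}I_{\{X_{s}=0\}}\,d\langle\beta\rangle_{s}.
\]
If the last integral vanishes q.s., then $X^{2}$ solves the BESQ equation (\ref{besq}), and the uniqueness in Proposition \ref{solution besq} forces $X^{2}=Z$, hence $X=\sqrt{Z}$ q.s. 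To make the last integral vanish it suffices to show $X>0$ q.s.\ on $[0,T]$, which I would establish by a scale-function argument applied directly to $X$: for $d>2$, It\^o's formula applied to $\phi(x)=x^{2-d}$ on $X_{\cdot\wedge\sigma_{a}\wedge\sigma_{b}}$ (with $\sigma_{a}=\inf\{t:X_{t}\leq a\}$, $\sigma_{b}=\inf\{t:X_{t}\geq b\}$) shows that the drift term cancels exactly, producing a bounded symmetric $G$-martingale; for $d=2$ one uses $\log X$ analogously. Running the martingale identity exactly as in the proof of Theorem \ref{path}, letting $a\downarrow 0$, and using a Lemma \ref{taub}-type estimate to control $\sigma_{b}$, one concludes $\mathrm{c}(\{\sigma_{a}<T\})\to 0$, so $X$ does not reach $0$ on $[0,T]$ q.s.

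The main obstacle is this last step: carrying out the scale-function/symmetric-martingale analysis for a generic $M_{G}^{2}$ solution $X$, because the arguments of Theorem \ref{path} and Lemma \ref{taub} were written for the particular process $Z=|B^{x}|^{2}$ constructed from the $G$-Brownian motion. One first needs quasi-continuity of $\sigma_{a}\wedge\sigma_{b}\wedge t$ for $X$ (obtained from Lemma 4.3 of \cite{song2021grad} and Corollary 3.7 of \cite{liu2020exit}, just as for $\tau_{a}\wedge\tau_{b}$), then verify from the BES SDE that the scale-function composed processes are indeed bounded symmetric $G$-martingales, and finally transfer the Theorem \ref{path}-type capacity estimates to $X$. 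Structurally the argument parallels the proofs already in Section \ref{sec3}, but it must be redone for an arbitrary solution and is the only delicate ingredient.
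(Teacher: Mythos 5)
Your skeleton is workable, but it diverges from the paper's proof in both halves, and in the uniqueness half it is not yet a complete argument. For the existence part, the paper does not stop at $\tau_a\wedge\tau_b$: it stops only from below at $\tau_\epsilon=\inf\{t\geq0: Z_t\leq\epsilon\}$, applies It\^o on $[0,\tau_\epsilon\wedge T]$, and then removes the stopping by showing $\sqrt{Z}\,I_{\{\tau_\epsilon\geq T\}}\to\sqrt{Z}$ in the $M_G^2(0,T)$ norm, using a second-moment bound on $Z$ (It\^o applied to $Z^2$), the H\"older inequality and Theorem \ref{path}(1). Your two-sided stopping with $\mathrm{c}(\{\tau_a\wedge\tau_b<T\})\to0$ reaches the same conclusion and is a legitimate alternative, but you should phrase the passage to the limit via the increasing events $\{\tau_{1/n}\wedge\tau_n\geq T\}$ and monotonicity of capacity rather than saying the complement has ``capacity tending to $1$'', which is not a meaningful statement for a non-additive capacity; also, your justification of $\sqrt{Z}\in\tilde{M}_G^2(0,T)$ by the bound $\sqrt{Z_t}\leq 1+Z_t$ only gives finiteness of the norm, not membership in the completion (use instead that $|\sqrt{x}-\sqrt{y}|^2\leq|x-y|$ together with $Z\in\tilde{M}_G^2(0,T)$).

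The genuine divergence is in uniqueness. The paper's argument is two lines: apply It\^o to $\bar{X}_t^2$, obtain equation (\ref{besq}), and invoke the uniqueness of Proposition \ref{solution besq}; it never proves that an arbitrary nonnegative $M_G^2$ solution stays positive. You correctly observe that the It\^o computation in fact leaves the extra term $(d-1)\int_0^t I_{\{\bar{X}_s=0\}}\mathrm{d}\langle\beta\rangle_s$ (note the factor $d-1$, which you dropped), which the paper silently discards, and you propose to eliminate it by redoing the scale-function and stopping-time analysis for the generic solution $X$. That plan is plausible — the drift cancellation for $x^{2-d}$ (resp.\ $\log x$) does occur, and an analogue of the second half of Lemma \ref{taub} for $X$ can be extracted from $X=\sqrt{z}+\text{drift}+\beta$ with the drift controlled through the $M_G^1$ membership of $X^{-1}I_{\{X\neq0\}}$ — but as written it is only a sketch of precisely the step you yourself flag as delicate: quasi-continuity of the hitting times of $X$, the symmetric-martingale property of the stopped scale functions, and the Lemma \ref{taub}/Theorem \ref{path}-type capacity estimates all have to be re-established for $X$, and none of this is carried out. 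So you are attacking a harder (and arguably more careful) version of the uniqueness step than the paper does, but your proposal does not yet contain that proof, whereas everything the paper actually does at this point — It\^o on $\bar{X}^2$ plus Proposition \ref{solution besq} — you could have stated in a few lines.
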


\begin{proof}
	First, we prove the uniqueness. Assume that $\bar{X}\in M_{G}^{2}(0,T)$ is a non-negative solution of equation (\ref{bes}). Applying the It\^{o} formula to $ \bar{X}_{t}^{2} $, we have 
	\begin{equation*}
		\bar{X}_{t}^{2}=z+2\int_{0}^{t}\bar{X}_{s}\mathrm{d}\beta _{s}+d\left\langle \beta \right\rangle_{t},\quad t\in \left[ 0,T\right] .
	\end{equation*}
	By Proposition \ref{solution besq}, we get $\bar{X}^{2}=Z$ q.s. Since $\bar{X}\geq0$ q.s. we obtain $\bar{X}=\sqrt{Z}$ q.s.
	
	We now turn to show that $\sqrt{Z}$ satisfies equation (\ref{bes}). Since $Z$ is a non-negative process, $\sqrt{Z}$ is well-defined. Notice that $\sqrt{x}\notin C^{2}(\mathbb{R})$, so we can't use the It\^{o} formula directly to $\sqrt{Z}$. Define $\tau_{\epsilon }=\inf \left\{ t\geq 0,Z_{t}\leq \epsilon \right\} $ for $\epsilon >0$. When $t\in [0,\tau _{\epsilon }\wedge T]$, $\sqrt{x}\in C^{2}([\epsilon,\infty ))$, so the It\^{o} formula is still valid. Thus, we have 
	\begin{align*}
		\sqrt{Z_{t}}=\sqrt{z}+\frac{d-1}{2}\int_{0}^{t}\frac{1}{\sqrt{Z_{s}}}\mathrm{d}\left\langle \beta \right\rangle_{s}+\beta _{t},\quad t\in \left[ 0,\tau_{\epsilon}\wedge T\right] .
	\end{align*}
	At this point, we cannot obtain the expression of $\sqrt{Z_{t}},\ t\in(\tau _{\epsilon },T]$ when $\tau _{\epsilon }<T$. From Theorem \ref{path}, we know $\lim_{\epsilon\downarrow0}\mathrm{c}(\{\tau _{\epsilon }< T\})=0 $. Thus, we yield
	\begin{align}\label{zi}
		\hat{\mathbb{E}}\left[ \int_{0}^{T}\left\vert \sqrt{Z_{t}}-\sqrt{Z_{t}}I_{\{\tau_{\epsilon}\geq T\}}\right\vert ^{2}\mathrm{d}t\right] \leq{} &\int_{0}^{T}\hat{\mathbb{E}}\left[ \left\vert \sqrt{Z_{t}}I_{\left\{ \tau _{\epsilon }<T\right\}}\right\vert ^{2}\right] \mathrm{d}t \notag\\
		\leq{} &\int_{0}^{T}\hat{\mathbb{E}}\left[ \left\vert Z_{t}I_{\left\{ \tau_{\epsilon }<T\right\} }\right\vert \right] \mathrm{d}t.
	\end{align}
	By applying the It\^{o} formula to $Z_{t}^{2}$, it leads to
	\begin{align*}
		Z_{t}^{2}=z^{2}+2\int_{0}^{t}Z_{s}^{\frac{3}{2}}\mathrm{d}\beta
		_{s}+\int_{0}^{t}(2d+4)Z_{s}\mathrm{d}\left\langle \beta \right\rangle _{s},\quad t\in \left[ 0,T\right]. 
	\end{align*}%
	Combining with equation (\ref{besq}), we can get 
	\begin{align*}
		\hat{\mathbb{E}}\left[ Z_{t}^{2}\right] \leq{} &z^{2}+\left( 2d+4\right)
		\int_{0}^{t}\hat{\mathbb{E}}\left[ Z_{s}\right] \mathrm{d}\left\langle \beta
		\right\rangle _{s} \\
		\leq{} &z^{2}+\left( 2d+4\right)\bar{\sigma}^{2}\int_{0}^{t}\hat{\mathbb{E}}\left[ Z_{s}\right] \mathrm{d}s \\
		\leq{}&z^{2}+(2d+4)\bar{\sigma}^{2}\int_{0}^{t}(z+d\bar{\sigma}^{2}s) \mathrm{d}s \\
		\leq{} &C_{z,d,\bar{\sigma},T},
	\end{align*}
	where $ C_{z,d,\bar{\sigma},T}=z^{2}+\bar{\sigma}^{2}T(2d+4)(z+d\bar{\sigma}^{2}T) $. According to (\ref{zi}) and the H\"{o}lder inequality, we obtain 
	\begin{align*}
		\hat{\mathbb{E}}\left[ \int_{0}^{T}\left\vert \sqrt{Z_{t}}-\sqrt{Z_{t}}I_{\{\tau_{\epsilon}\geq T\}}\right\vert ^{2}\mathrm{d}t\right] \leq{} &\int_{0}^{T}\hat{\mathbb{E}}\left[ \left\vert Z_{t}I_{\left\{ \tau _{\epsilon }<T\right\}}\right\vert \right] \mathrm{d}t \\
		\leq{} &\int_{0}^{T}\left( \hat{\mathbb{E}}\left[ Z_{t}^{2}\right] \right) ^{\frac{1}{2}}\left( \hat{\mathbb{E}}\left[ I_{\left\{ \tau _{\epsilon}<T\right\} }\right] \right) ^{\frac{1}{2}}\mathrm{d}t \\
		\leq{} &\int_{0}^{T}\left( C_{z,d,\bar{\sigma},T}\right) ^{\frac{1}{2}}\left( \mathrm{c}\left( \left\{ \tau _{\epsilon }<T\right\} \right) \right) ^{\frac{1}{2}}\mathrm{d}t \\
		\leq{} &T\left( C_{z,d,\bar{\sigma},T}\right) ^{\frac{1}{2}}\left( \mathrm{c}\left( \left\{ \tau _{\epsilon }<T\right\} \right) \right) ^{\frac{1}{2}}\rightarrow 0\quad \text{as}\quad \varepsilon \downarrow 0.
	\end{align*}
	Then we get $\sqrt{Z_{t}}=\sqrt{Z_{t}}I_{\{\tau_{\epsilon}\geq T\}}$ q.s. as $\varepsilon \downarrow 0$. Therefore, 
	\begin{equation*}
		\sqrt{Z_{t}}=\sqrt{z}+\frac{d-1}{2}\int_{0}^{t}\frac{1}{\sqrt{Z_{s}}}\mathrm{d}\left\langle\beta \right\rangle _{s}+\beta _{t},\quad t\in \left[ 0,T\right] .
	\end{equation*}
	This completes the proof.
\end{proof}

\section{The Laplace transform of squared $G$-Bessel processes}\label{sec4}

In this section, we present upper and lower bounds on the Laplace transform of the squared $G$-Bessel process.

\begin{proposition}\label{expz}
	Let $T,z\geq0$ and let $d\in \mathbb{N}^{+}$.\ Let $Z$ be a $G$-$\mathrm{BESQ}_{z}^{d}$. Then for each $\lambda >0$ and $ t\in[0,T] $, we have 
	\begin{align}
		\hat{\mathbb{E}}\left[ \exp \left( -\lambda Z_{t}\right) \right]\leq{}&\left( \frac{1+2\lambda \left( \bar{\sigma}^{2}-\underline{\sigma }^{2}\right) t}{1+2\lambda \bar{\sigma}^{2}t}\right) ^{\frac{d}{2}}\exp \left( \frac{-\lambda z}{1+2\lambda \bar{\sigma}^{2}t}\right),\label{expz<}\\ 
		\hat{\mathbb{E}}\left[ \exp \left( -\lambda Z_{t}\right) \right]\geq{}& \left( 1+2\lambda \bar{\sigma}^{2}t\right) ^{-\frac{d}{2}\left( 1+2\lambda\left( \bar{\sigma}^{2}-\underline{\sigma }^{2}\right) t\right) }\exp \left( \frac{-\lambda z\left( 1+2\lambda \left( \bar{\sigma}^{2}-\underline{\sigma }^{2}\right) t\right) }{1+2\lambda \bar{\sigma}^{2}t}\right). \label{expz>}
	\end{align}
\end{proposition}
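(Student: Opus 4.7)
The plan is to find two deterministic functions $\varphi_{\pm}(s,y)$ on $[0,t]\times[0,\infty)$ satisfying $\varphi_{\pm}(t,y)=e^{-\lambda y}$, such that $s\mapsto\varphi_{+}(s,Z_s)$ is a $G$-supermartingale while $s\mapsto\varphi_{-}(s,Z_s)$ is a $G$-submartingale on $[0,t]$. Taking $\hat{\mathbb{E}}$ at $s=0$ then gives $\varphi_{-}(0,z)\leq\hat{\mathbb{E}}[e^{-\lambda Z_t}]\leq\varphi_{+}(0,z)$, and the task reduces to choosing $\varphi_{\pm}$ so that $\varphi_{\pm}(0,z)$ coincide with the right-hand sides of (\ref{expz<}) and (\ref{expz>}).

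Set $D(s):=1+2\lambda\bar{\sigma}^{2}(t-s)$ and $N(s):=1+2\lambda(\bar{\sigma}^{2}-\underline{\sigma}^{2})(t-s)$; then $D(t)=N(t)=1$, $D\geq N\geq 1$, and the identity $(\bar{\sigma}^{2}-\underline{\sigma}^{2})D-\bar{\sigma}^{2}N\equiv-\underline{\sigma}^{2}$ holds. I would take
\[
\varphi_{+}(s,y):=\left(\frac{N(s)}{D(s)}\right)^{d/2}\exp\!\left(-\frac{\lambda y}{D(s)}\right),\qquad \varphi_{-}(s,y):=D(s)^{-\frac{d}{2}N(s)}\exp\!\left(-\frac{\lambda y\,N(s)}{D(s)}\right).
\]
Both satisfy the terminal condition, and substituting $s=0$ reproduces the stated upper and lower bounds exactly.

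Write $\varphi_{\pm}(s,y)=\exp(A_{\pm}(s)y+B_{\pm}(s))$, so $A_{+}=-\lambda/D$, $B_{+}=\tfrac{d}{2}\ln(N/D)$, $A_{-}=-\lambda N/D$, and $B_{-}=-\tfrac{d}{2}N\ln D$. Using $dZ_s=2\sqrt{Z_s}\,d\beta_s+d\,d\langle\beta\rangle_s$ and $d\langle Z\rangle_s=4Z_s\,d\langle\beta\rangle_s$ from Proposition \ref{solution besq}, the $G$-It\^o formula gives
\[
d\varphi_{\pm}(s,Z_s)=\varphi_{\pm}\big[(A_{\pm}'Z_s+B_{\pm}')\,ds+(dA_{\pm}+2Z_s A_{\pm}^{2})\,d\langle\beta\rangle_s+2\sqrt{Z_s}\,A_{\pm}\,d\beta_s\big].
\]
Since $d\langle\beta\rangle_s/ds\in[\underline{\sigma}^{2},\bar{\sigma}^{2}]$ path-wise, the drift at time $s$ equals $\varphi_{\pm}\{(A_{\pm}'+2A_{\pm}^{2}\sigma^{2})Z_s+(B_{\pm}'+dA_{\pm}\sigma^{2})\}\,ds$ for some $\sigma^{2}=\sigma_s^{2}\in[\underline{\sigma}^{2},\bar{\sigma}^{2}]$. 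A direct computation (using the identity above to simplify $A_{-}'=-2\lambda^{2}\underline{\sigma}^{2}/D^{2}$) shows that for every such $\sigma^{2}$, in the $+$ case both coefficients are non-positive: $A_{+}'+2A_{+}^{2}\sigma^{2}=2A_{+}^{2}(\sigma^{2}-\bar{\sigma}^{2})\leq 0$ and $B_{+}'+dA_{+}\sigma^{2}=\tfrac{d\lambda(\bar{\sigma}^{2}-\sigma^{2})}{D}-\tfrac{d\lambda(\bar{\sigma}^{2}-\underline{\sigma}^{2})}{N}\leq 0$ (the latter by $D\geq N$); whereas in the $-$ case both are non-negative: $A_{-}'+2A_{-}^{2}\sigma^{2}=\tfrac{2\lambda^{2}(N^{2}\sigma^{2}-\underline{\sigma}^{2})}{D^{2}}\geq 0$ (by $N\geq 1$ and $\sigma^{2}\geq\underline{\sigma}^{2}$) and $B_{-}'+dA_{-}\sigma^{2}=d\lambda(\bar{\sigma}^{2}-\underline{\sigma}^{2})\ln D+\tfrac{d\lambda N(\bar{\sigma}^{2}-\sigma^{2})}{D}\geq 0$. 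Hence the drift of $\varphi_{+}(s,Z_s)$ is path-wise $\leq 0$ and the drift of $\varphi_{-}(s,Z_s)$ is path-wise $\geq 0$.

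Finally, $\varphi_{\pm}\leq 1$ and $A_{\pm}$ is bounded on $[0,t]\times[0,\infty)$, and $\sqrt{Z}\in M_{G}^{2}(0,t)$ by Proposition \ref{solution besq}, so the $d\beta$-integral is a symmetric $G$-martingale with zero $\hat{\mathbb{E}}$; the monotonicity and constant-preserving property of $\hat{\mathbb{E}}$ then deliver $\hat{\mathbb{E}}[e^{-\lambda Z_t}]\leq\varphi_{+}(0,z)$ and $\hat{\mathbb{E}}[e^{-\lambda Z_t}]\geq\varphi_{-}(0,z)$. The main obstacle is guessing the right ansatz for $\varphi_{-}$: while $\varphi_{+}$ is essentially the classical Laplace transform of $\mathrm{BESQ}^{d}$ at volatility $\bar{\sigma}$ multiplied by the correction $(N/D)^{d/2}$, the lower-bound form $D^{-(d/2)N}\exp(-\lambda yN/D)$ is less transparent, and its success hinges on the identity $(\bar{\sigma}^{2}-\underline{\sigma}^{2})D-\bar{\sigma}^{2}N=-\underline{\sigma}^{2}$, which forces non-negativity of the $Z_s$-coefficient in the drift; once the ansatz is in hand, the remaining verification is entirely mechanical.
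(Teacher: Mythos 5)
Your proposal is correct, and it takes a genuinely different route from the paper. The paper constructs a \emph{single exact symmetric martingale} built from the quadratic variation itself, namely $M_t=\bigl(1+2\lambda(\bar{\sigma}^2T-\langle\beta\rangle_t)\bigr)^{-d/2}\exp\bigl(\tfrac{-\lambda Z_t}{1+2\lambda(\bar{\sigma}^2T-\langle\beta\rangle_t)}\bigr)$, for which the It\^o drift cancels identically; it then evaluates $\hat{\mathbb{E}}[M_T]=M_0$ and sandwiches $M_T$ using $\underline{\sigma}^2T\le\langle\beta\rangle_T\le\bar{\sigma}^2T$. The upper bound falls out directly, while the lower bound needs the extra H\"older step $\hat{\mathbb{E}}\bigl[\exp\bigl(\tfrac{-\lambda Z_T}{N(0)}\bigr)\bigr]\le\bigl(\hat{\mathbb{E}}[\exp(-\lambda Z_T)]\bigr)^{1/N(0)}$, which is exactly how the exponent $N(0)=1+2\lambda(\bar{\sigma}^2-\underline{\sigma}^2)T$ enters their bound. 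You instead run a comparison-principle argument with two deterministic test functions $\varphi_{\pm}(s,y)$, checking that the combined $ds$ and $d\langle\beta\rangle_s$ drift has the right sign for every admissible density $\sigma_s^2\in[\underline{\sigma}^2,\bar{\sigma}^2]$; I verified your coefficient computations (including $A_-'=-2\lambda^2\underline{\sigma}^2/D^2$ via the identity $(\bar{\sigma}^2-\underline{\sigma}^2)D-\bar{\sigma}^2N=-\underline{\sigma}^2$, and the sign checks using $N\ge1$, $D\ge N$), and $\varphi_{\pm}(0,z)$ indeed reproduce (\ref{expz<}) and (\ref{expz>}). What each approach buys: yours avoids the H\"older step because the exponent $N$ is baked into the ansatz $\varphi_-$, at the price of guessing a less transparent test function and of invoking the q.s.\ absolute continuity $d\langle\beta\rangle_s=\sigma_s^2\,ds$ with $\sigma_s^2\in[\underline{\sigma}^2,\bar{\sigma}^2]$ (true, but worth citing explicitly; the paper only ever uses the endpoint bound on $\langle\beta\rangle_T$); the paper's route requires one It\^o computation with exact cancellation and makes the role of the volatility sandwich very visible. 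Two small points you should tighten: state that the pathwise drift-sign argument is carried out q.s.\ (equivalently under every $P\in\mathcal{P}$) before applying monotonicity and translation invariance of $\hat{\mathbb{E}}$, and justify that the integrand $2\sqrt{Z_s}\,A_{\pm}(s)\varphi_{\pm}(s,Z_s)$ lies in $M_G^2(0,t)$ so that the $d\beta$-integral is a symmetric martingale with zero expectation — though the paper is no more explicit about this latter point than you are.
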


\begin{proof}
	Define 
	\begin{equation*}
		M_{t}:=\left( 1+2\lambda \left( \bar{\sigma}^{2}T-\left\langle \beta
		\right\rangle _{t}\right) \right) ^{-\frac{d}{2}}\exp \left( \frac{-\lambda
			Z_{t}}{1+2\lambda \left( \bar{\sigma}^{2}T-\left\langle \beta \right\rangle
			_{t}\right) }\right) ,\quad t\in \left[ 0,T\right] .
	\end{equation*}%
	According to the It\^{o} formula, it is easy to show that 
	\begin{equation*}
		\mathrm{d}\left( 1+2\lambda \left( \bar{\sigma}^{2}T-\left\langle \beta \right\rangle_{t}\right) \right) ^{-\frac{d}{2}}=\lambda d\left( 1+2\lambda \left( \bar{\sigma}^{2}T-\left\langle \beta \right\rangle _{t}\right) \right) ^{-\frac{d}{2}-1}\mathrm{d}\left\langle \beta \right\rangle _{t},
	\end{equation*}
    and
	\begin{align*}
		{}&\mathrm{d}\exp \left( \frac{-\lambda Z_{t}}{1+2\lambda \left( \bar{\sigma}
		^{2}T-\left\langle \beta \right\rangle _{t}\right) }\right) \\
	    ={}&\exp \left( \frac{-\lambda Z_{t}}{1+2\lambda \left( \bar{\sigma}^{2}T-\left\langle \beta\right\rangle _{t}\right) }\right)\left(1+2\lambda \left( \bar{\sigma}^{2}T-\left\langle \beta \right\rangle_{t}\right) \right) ^{-1} \left( -2\lambda \sqrt{Z_{t}}\mathrm{d}\beta _{t}-\lambda d\mathrm{d}\left\langle \beta \right\rangle _{t}\right) .
	\end{align*}%
	Therefore,
	\begin{align*}
		\mathrm{d}M_{t} ={}&\left( 1+2\lambda \left( \bar{\sigma}^{2}T-\left\langle \beta\right\rangle _{t}\right) \right) ^{-\frac{d}{2}}\mathrm{d}\exp \left( \frac{-\lambda
		Z_{t}}{1+2\lambda \left( \bar{\sigma}^{2}T-\left\langle \beta \right\rangle_{t}\right) }\right) \\
	    {}&+\exp \left( \frac{-\lambda Z_{t}}{1+2\lambda \left( \bar{\sigma}^{2}T-\left\langle \beta \right\rangle _{t}\right) }\right)\mathrm{d}\left( 1+2\lambda \left( \bar{\sigma}^{2}T-\left\langle \beta \right\rangle
		_{t}\right) \right) ^{-\frac{d}{2}} \\
		={}&\exp \left( \frac{-\lambda Z_{t}}{1+2\lambda \left( \bar{\sigma}^{2}T-\left\langle \beta \right\rangle _{t}\right) }\right) \left( -2\lambda \sqrt{Z_{t}}\left( 1+2\lambda \left( \bar{\sigma}^{2}T-\left\langle \beta\right\rangle _{t}\right) \right) ^{-\frac{d}{2}-1}\right) \mathrm{d}\beta _{t}.
	\end{align*}
	Clearly, $ M $ is a symmetric martingale. Through $ \hat{\mathbb{E}}[M_{T}]=\hat{\mathbb{E}}[M_{0}] $, we have
	\begin{align}\label{mtm0}
		\hat{\mathbb{E}}\left[ \left( 1+2\lambda \left( \bar{\sigma}
		^{2}T-\left\langle \beta \right\rangle _{T}\right) \right) ^{-\frac{d}{2}}\exp \left( \frac{-\lambda Z_{T}}{1+2\lambda \left( \bar{\sigma}^{2}T-\left\langle \beta \right\rangle _{T}\right) }\right) \right]  =\left(1+2\lambda \bar{\sigma}^{2}T\right) ^{-\frac{d}{2}}\exp \left( \frac{-\lambda z}{1+2\lambda \bar{\sigma}^{2}T}\right) .
	\end{align}

	Notice that $\underline{\sigma }^{2}T\leq \left\langle \beta \right\rangle
	_{T}\leq \bar{\sigma}^{2}T$. Then by the monotonicity of the function $x^{-\frac{d}{2}}$, we get
	\begin{equation}\label{fx}
		\left( 1+2\lambda \left( \bar{\sigma}^{2}-\underline{\sigma}^{2}\right) T\right)^{-\frac{d}{2}}\leq \left( 1+2\lambda \left( \bar{\sigma}^{2}T-\left\langle
		\beta \right\rangle _{T}\right) \right) ^{-\frac{d}{2}}\leq 1.
	\end{equation}
	Similarly, we can obtain
	\begin{equation}\label{exp}
		\exp \left( -\lambda Z_{T}\right) \leq \exp \left( \frac{-\lambda Z_{T}}{%
			1+2\lambda \left( \bar{\sigma}^{2}T-\left\langle \beta \right\rangle
			_{T}\right) }\right) \leq \exp \left( \frac{-\lambda Z_{T}}{1+2\lambda
			\left( \bar{\sigma}^{2}-\underline{\sigma}^{2}\right) T}\right).
	\end{equation}
	Combining (\ref{fx}) and (\ref{exp}), we see that 
	\begin{equation*}
		\left( 1+2\lambda \left( \bar{\sigma}^{2}-\underline{\sigma}^{2}\right) T\right)^{-\frac{d}{2}}\exp \left( -\lambda Z_{T}\right) \leq M_{T}\leq \exp \left( \frac{-\lambda Z_{T}}{1+2\lambda \left( \bar{\sigma}^{2}-\underline{\sigma}^{2}\right) T}\right).
	\end{equation*}
    From the monotonicity of $ G $-expectation, there is
	\begin{equation}\label{m<>}
		\hat{\mathbb{E}}\left[ \left( 1+2\lambda \left(\bar{\sigma}^{2}-\underline{\sigma}
		^{2}\right) T\right) ^{-\frac{d}{2}}\exp \left( -\lambda Z_{T}\right) \right]\leq \hat{\mathbb{E}}\left[ M_{T}\right] \leq \hat{\mathbb{E}}\left[ \exp\left( \frac{-\lambda Z_{T}}{1+2\lambda \left( \bar{\sigma}^{2}-\underline{\sigma}^{2}\right) T}\right) \right] .
	\end{equation}

    We now turn to the proof of (\ref{expz<}). According to (\ref{mtm0}) and (\ref{m<>}), we obtain
	\begin{equation*}
		\hat{\mathbb{E}}\left[ \left( 1+2\lambda \left( \bar{\sigma}^{2}-\underline{\sigma}^{2}\right) T\right) ^{-\frac{d}{2}}\exp \left( -\lambda Z_{T}\right) \right]
		\leq \left( 1+2\lambda \bar{\sigma}^{2}T\right) ^{-\frac{d}{2}}\exp \left( \frac{-\lambda z}{1+2\lambda \bar{\sigma}^{2}T}\right) .
	\end{equation*}
	It follows that 
	\begin{equation*}
		\hat{\mathbb{E}}\left[ \exp \left( -\lambda Z_{T}\right) \right] \leq \left( \frac{1+2\lambda \left( \bar{\sigma}^{2}-\underline{\sigma}^{2}\right) T}{1+2\lambda \bar{\sigma}^{2}T}\right) ^{\frac{d}{2}}\exp \left( \frac{-\lambda z}{1+2\lambda \bar{\sigma}^{2}T}\right) .
	\end{equation*}
    For each $ t \in [0,T]$, replacing the terminal time $ T $ with $ t $, then we obtain the desired result according to the above inequality.

    Finally, let us prove (\ref{expz>}). From (\ref{mtm0}) and (\ref{m<>}), we yield
	\begin{equation}\label{<m}
		\left( 1+2\lambda \bar{\sigma}^{2}T\right) ^{-\frac{d}{2}}\exp \left( \frac{-\lambda z}{1+2\lambda \bar{\sigma}^{2}T}\right) \leq \hat{\mathbb{E}}\left[\exp \left( \frac{-\lambda Z_{T}}{1+2\lambda \left( \bar{\sigma}^{2}-\underline{\sigma}^{2}\right) T}\right) \right] .
	\end{equation}
	Due to the H\"{o}lder inequality, it leads to
	\begin{align}\label{mholder}
		\hat{\mathbb{E}}\left[ \exp \left( \frac{-\lambda Z_{T}}{1+2\lambda \left( \bar{\sigma}^{2}-\underline{\sigma}^{2}\right) T}\right) \right] \leq{} &\left( \hat{\mathbb{E}}\left[ \left( \exp \left( \frac{-\lambda Z_{T}}{1+2\lambda\left( \bar{\sigma}^{2}-\underline{\sigma}^{2}\right) T}\right) \right)
		^{1+2\lambda \left( \bar{\sigma}^{2}-\underline{\sigma}^{2}\right) T}\right] \right) ^{\frac{1}{1+2\lambda \left( \bar{\sigma}^{2}-\underline{\sigma}^{2}\right) T}} \notag \\
		={}&\left( \hat{\mathbb{E}}\left[ \exp \left( -\lambda Z_{T}\right) \right]\right) ^{\frac{1}{1+2\lambda \left(\bar{\sigma}^{2}-\underline{\sigma}^{2}\right) T}}.
	\end{align}
	Using (\ref{mholder}) in (\ref{<m}), we have
	\begin{equation*}
		\left( 1+2\lambda \bar{\sigma}^{2}T\right) ^{-\frac{d}{2}}\exp \left( \frac{-\lambda z}{1+2\lambda \bar{\sigma}^{2}T}\right) \leq \left( \hat{\mathbb{E}}
		\left[ \exp \left( -\lambda Z_{T}\right) \right] \right) ^{\frac{1}{1+2\lambda \left( \bar{\sigma}^{2}-\underline{\sigma}^{2}\right) T}}
	\end{equation*}
    Raise both sides of the above inequality to the power of $ 1+2\lambda \left( \bar{\sigma}^{2}-\underline{\sigma}^{2}\right) T $, it follows that
	\begin{equation*}
		\left( 1+2\lambda \bar{\sigma}^{2}T\right) ^{-\frac{d}{2}\left( 1+2\lambda\left( \bar{\sigma}^{2}-\underline{\sigma}^{2}\right) T\right) }\exp \left( \frac{-\lambda z\left( 1+2\lambda \left( \bar{\sigma}^{2}-\underline{\sigma}^{2}\right)T\right) }{1+2\lambda \bar{\sigma}^{2}T}\right) \leq \hat{\mathbb{E}}\left[\exp \left( -\lambda Z_{T}\right) \right] .
	\end{equation*}
	Similarly, we can replace $ T $ with $ t $ in the above inequality and get (\ref{expz>}).
\end{proof}

\begin{remark}
	If $\underline{\sigma}^{2}=\bar{\sigma}^{2}=1$, then $Z$ is reduced to the squared
	Bessel process. In this case, $\hat{\mathbb{E}}\left[ \exp \left( -\lambda
	Z_{t}\right) \right] =\left( 1+2\lambda t\right) ^{-\frac{d}{2}}\exp \left( 
	\frac{-\lambda z}{1+2\lambda t}\right) $, which is consistent with the
	classical case.
\end{remark}

\section{The correspondence between squared $G$-Bessel processes and $ G' $-CIR processes}\label{sec5}

In this section, we will show that there is a class of $ G' $-CIR processes that are space-time transformed squared $ G $-Bessel processes. Before describing the relationship, we need to give a deterministic time transformation formula for the $ G' $-Brownian motion.

\begin{lemma}\label{time-change}
	Let $\beta $ be a one-dimensional $G'$-Brownian motion. Suppose $f$
	is function on $[0,\infty )$ that satisfies $f\in C^{1}([0,\infty ))$ and $f(0)=0$. Furthermore, it is also require that for each $0\leq n<\infty $ and $x\in \lbrack 0,n]$, $f'(x)$ is positive and bounded, i.e., there exists $0<l_{n}\leq L_{n}$ that satisfies $l_{n}\leq f'(x)\leq L_{n}$.
	Then 
	\begin{equation*}
		\int_{0}^{t}\frac{1}{\sqrt{f'(s)}}\mathrm{d}\beta _{f(s)},\quad
		t\geq 0,
	\end{equation*}
	is also a one-dimensional $G'$-Brownian motion.
\end{lemma}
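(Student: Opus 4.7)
The plan is to apply the one-dimensional Lévy-type martingale characterization (the Remark following Theorem \ref{Levy}) to the process $M_t := \int_0^t f'(s)^{-1/2}\, \mathrm{d}\beta_{f(s)}$. First, using the $C^1$ change of variable $u = f(s)$, I would give meaning to this integral by the identification
\[
M_t = \int_0^{f(t)} g(u)\, \mathrm{d}\beta_u, \qquad g(u) := \frac{1}{\sqrt{f'(f^{-1}(u))}},
\]
where $f^{-1}$ exists and is continuous because $f$ is $C^1$ with $f'>0$, and on $[0,f(n)]$ the integrand satisfies $g(u)\in [L_n^{-1/2}, l_n^{-1/2}]$. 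Thus $g$ is deterministic and bounded on compacts, hence $g\in \tilde M_{G'}^2(0,f(T))$ for every $T$, and $M_t$ is a well-defined element of $\tilde L_{G'}^p(\Omega_{f(t)})\subset L_G^p(\Omega_{f(t)})$ for every $p\geq 1$.

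Next, I would verify the hypotheses of the Lévy characterization. Integrability and $M_0 = 0$ are immediate from $f(0) = 0$. As a $G'$-Itô integral against $\beta$ with deterministic integrand $g$, $N_v := \int_0^v g(u)\, \mathrm{d}\beta_u$ is a symmetric $G'$-martingale, hence so is $M_t = N_{f(t)}$ under the time-changed filtration $\mathcal H_t := \mathcal F_{f(t)}$. For the quadratic parts, applying the $G'$-Itô formula to $N_v^2$ yields
\[
M_t^2 = 2\int_0^{f(t)} N_u\, g(u)\, \mathrm{d}\beta_u + \int_0^{f(t)} g(u)^2\, \mathrm{d}\langle \beta\rangle_u.
\]
Conditioning on $\mathcal H_s$ (for $s \leq t$), the stochastic integral contributes $M_s^2$ plus a symmetric martingale increment. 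For the remaining piece, since $g^2$ is deterministic and nonnegative, a standard approximation argument for $\hat{\mathbb{E}}_s[\int \phi(u)\mathrm{d}\langle\beta\rangle_u]$ with $\phi \geq 0$ deterministic gives $\hat{\mathbb{E}}_s[\int_{f(s)}^{f(t)} g(u)^2\, \mathrm{d}\langle\beta\rangle_u] = \bar\sigma^2\int_{f(s)}^{f(t)} g(u)^2\, \mathrm{d}u$, and the substitution $u = f(r)$ produces the key cancellation
\[
\int_{f(s)}^{f(t)} g(u)^2 \,\mathrm{d}u = \int_s^t \frac{1}{f'(r)}\, f'(r)\, \mathrm{d}r = t-s.
\]
Thus $\{M_t^2 - \bar\sigma^2 t\}_{t\geq 0}$ is a symmetric martingale; the argument for $\{-M_t^2 + \underline{\sigma}^2 t\}_{t\geq 0}$ is analogous, using the lower bound $-\hat{\mathbb{E}}_s[-\int_{f(s)}^{f(t)} g(u)^2\, \mathrm{d}\langle\beta\rangle_u] = \underline{\sigma}^2(t-s)$.

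For the cubic modulus condition, the $G$-BDG inequality together with the boundedness of $g$ and $\mathrm{d}\langle\beta\rangle_u \leq \bar\sigma^2\, \mathrm{d}u$ gives
\[
\hat{\mathbb{E}}[|M_{t+\epsilon} - M_t|^3] \leq C\, \hat{\mathbb{E}}\Bigl[\Bigl(\int_{f(t)}^{f(t+\epsilon)} g(u)^2\, \mathrm{d}\langle\beta\rangle_u\Bigr)^{3/2}\Bigr] \leq C\bar\sigma^3 \epsilon^{3/2},
\]
which is $o(\epsilon)$ uniformly for $t \leq T$. Lévy's characterization then concludes that $M$ is a one-dimensional $G'$-Brownian motion.

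The main obstacle I anticipate is a filtration subtlety: Theorem \ref{Levy} requires $M_t \in L_G^3(\Omega_t)$ with respect to the filtration $\mathcal F_t = \sigma(B_s : s\leq t)$ of the underlying $d$-dimensional $G$-Brownian motion $B$, while $M_t$ is naturally adapted to $\mathcal F_{f(t)}$, which can be strictly larger than $\mathcal F_t$ when $f(t) > t$. To circumvent this, I would work intrinsically within the $G'$-expectation framework carried by $\beta$ (where the increment independence of Lemma \ref{beta} is exactly what is needed to apply the characterization), since the conclusion — that $M$ is a $G'$-Brownian motion — is a statement about its finite-dimensional $G'$-distribution and is captured by the Lévy characterization applied in the time-changed filtration $\mathcal H_t$.
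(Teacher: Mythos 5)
Your proposal is correct and follows essentially the same route as the paper: identify the integral as $\int_0^{f(t)}(f'(f^{-1}(r)))^{-1/2}\mathrm{d}\beta_r$, work in the time-changed filtration $\bar{\mathbb{E}}_t=\hat{\mathbb{E}}_{f(t)}$, verify the symmetric-martingale, quadratic ($M_t^2-\bar\sigma^2 t$ and $-M_t^2+\underline{\sigma}^2 t$, via the cancellation $\int_{f(s)}^{f(t)}g(u)^2\,\mathrm{d}u=t-s$) and cubic-increment (BDG) conditions, and conclude by the L\'evy characterization. The only differences are cosmetic: the paper reaches the quadratic martingale property through Proposition 3 of \cite{hu2019levy} and Proposition 1.4 of Chapter IV in \cite{peng2019nonlinear} rather than applying It\^o's formula to $N_v^2$ directly, and your filtration remark is exactly the paper's construction of $(\Omega,\bar L_G^1(\Omega),\bar{\mathbb{E}})$.
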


\begin{proof}
	By taking $r=f(s)$, we obtain%
	\begin{equation*}
		\int_{0}^{t}\frac{1}{\sqrt{f'(s)}}\mathrm{d}\beta
		_{f(s)}=\int_{0}^{f(t)}\frac{1}{\sqrt{f'(f^{-1}(r))}}\mathrm{d}%
		\beta _{r}.
	\end{equation*}
	Define 
	\begin{equation*}
		\bar{M}_{t}=\int_{0}^{f(t)}\frac{1}{\sqrt{f'(f^{-1}(r))}}\mathrm{d}
		\beta_{r},\quad\bar{L}_{G}^{p}(\Omega_{t})=\tilde{L}_{G}^{p}(\Omega
		_{f(t)}),\quad \bar{L}_{G}^{p}(\Omega )=\cup _{t=1}^{\infty }\bar{L}_{G}^{p}(\Omega_{t}),\quad\bar{\mathbb{E}}_{t}=\hat{\mathbb{E}}_{f(t)},\quad \bar{\mathbb{E}}=\hat{\mathbb{E}}.
	\end{equation*}
	It can easily be shown that $(\Omega ,\bar{L}_{G}^{1}(\Omega ),\bar{\mathbb{E}})$ is a sublinear expectation space. Since $\frac{1}{\sqrt{f'(f^{-1}(r))}}$ is a bounded and deterministic function, we know that $\bar{M}_{t}\in\bar{L}_{G}^{1}(\Omega _{t})$. By using the BDG inequality (Lemma 1.12 in Chapter \Rmnum{8} of \cite{peng2019nonlinear}), we get 
	\begin{align*}
		\bar{\mathbb{E}}\left[ \left\vert \bar{M}_{t}\right\vert ^{4}\right] ={}&\hat{\mathbb{E}}\left[ \left\vert \int_{0}^{f(t)}\frac{1}{\sqrt{f^{'}(f^{-1}(r))}}\mathrm{d}\beta _{r}\right\vert ^{4}\right]  \\
		\leq{} &c\bar{\sigma}^{4}\hat{\mathbb{E}}\left[ \left\vert \int_{0}^{f(t)}\frac{1}{f'(f^{-1}(r))}\mathrm{d}r\right\vert ^{2}\right]  \\
		={}&c\bar{\sigma}^{4}t^{2}.
	\end{align*}%
	According to Theorem 54 of \cite{denis2011function}, we yield $\bar{M}_{t}\in \bar{L}_{G}^{3}(\Omega _{t})$. For $0\leq t_{1}\leq t_{2}$, it follows that 
	\begin{align*}
		\bar{\mathbb{E}}_{t_{1}}\left[ \bar{M}_{t_{2}}\right]  ={}&\hat{\mathbb{E}}_{f(t_{1})}\left[ \int_{0}^{f(t_{2})}\frac{1}{\sqrt{f'(f^{-1}(r))}}\mathrm{d}\beta _{r}\right]  \\
		={}&\int_{0}^{f(t_{1})}\frac{1}{\sqrt{f'(f^{-1}(r))}}\mathrm{d}\beta
		_{r} \\
		={}&\bar{M}_{t_{1}.}
	\end{align*}%
	From the fact that $\bar{\mathbb{E}}\left[ \bar{M}_{t}\right] =\bar{\mathbb{E}}\left[ -\bar{M}_{t}\right] =0$, $\bar{M}$ is a
	symmetric martingale under $(\Omega ,\bar{L}_{G}^{1}(\Omega),\bar{\mathbb{E}})$. Applying the BDG inequality, we obtain
	\begin{align*}
		\bar{\mathbb{E}}\left[ \left\vert \bar{M}_{t+\epsilon }-\bar{M}
		_{t}\right\vert ^{3}\right]  
		={}&\hat{\mathbb{E}}\left[ \left\vert\int_{f(t)}^{f(t+\epsilon)}\frac{1}{\sqrt{f'(f^{-1}(r))}}\mathrm{d}\beta _{r}\right\vert ^{3}\right]  \\
		\leq{} &c\bar{\sigma}^{3}\hat{\mathbb{E}}\left[ \left\vert
		\int_{f(t)}^{f(t+\epsilon )}\frac{1}{f'(f^{-1}(r))}\mathrm{d}r\right\vert ^{\frac{3}{2}}\right]  \\
		={}&c\bar{\sigma}^{3}\epsilon ^{\frac{3}{2}}.
	\end{align*}
	It is clear that
	\begin{equation*}
		\left\langle \bar{M}\right\rangle _{t}=\int_{0}^{f(t)}\frac{1}{f^{'
			}(f^{-1}(r))}\mathrm{d}\left\langle \beta \right\rangle _{r},\quad t\geq 0.
	\end{equation*}
	Due to Proposition 1.4 in Chapter \Rmnum{4} of \cite{peng2019nonlinear}, we know 
	\begin{equation}\label{mart}
		\hat{\mathbb{E}}_{s}\left[\int_{0}^{t}\frac{1}{f'(f^{-1}(r))}\mathrm{d}\left\langle\beta\right\rangle_{r}-\bar{\sigma}^{2}\int_{0}^{t}\frac{1}{f'(f^{-1}(r))}\mathrm{d}r\right]=\int_{0}^{s}\frac{1}{f'(f^{-1}(r))}\mathrm{d}\left\langle\beta\right\rangle_{r}-\bar{\sigma}^{2}\int_{0}^{s}\frac{1}{f'(f^{-1}(r))}\mathrm{d}r,
	\end{equation}
	where $0\leq s\leq t$. Recall that $\bar{M}$ is a symmetric martingale. Then
	for $0\leq t_{1}\leq t_{2}$, we have 
	\begin{equation}\label{sym mart}
		\bar{\mathbb{E}}_{t_{1}}\left[\bar{M}_{t_{1}}\left(\bar{M}_{t_{2}}-\bar{M}_{t_{1}}\right)\right]=\bar{\mathbb{E}}_{t_{1}}\left[ -\bar{M}_{t_{1}}\left( \bar{M}_{t_{2}}-\bar{M}_{t_{1}}\right) \right] =0.
	\end{equation}
	Combining (\ref{sym mart}), Proposition 3 in \cite{hu2019levy} and (\ref{mart}), it follows that 
	\begin{align*}
		\bar{\mathbb{E}}_{t_{1}}\left[\bar{M}_{t_{2}}^{2}-\bar{\sigma}^{2}t_{2}\right]  ={}&\bar{\mathbb{E}}_{t_{1}}\left[ \left(\bar{M}_{t_{2}}-\bar{M}_{t_{1}}+\bar{M}_{t_{1}}\right) ^{2}\right] -\bar{\sigma}^{2}t_{2} \\
		={}&\bar{\mathbb{E}}_{t_{1}}\left[\left(\bar{M}_{t_{2}}-\bar{M}
		_{t_{1}}\right) ^{2}+2\left( \bar{M}_{t_{2}}-\bar{M}_{t_{1}}\right) \bar{M}_{t_{1}}+\bar{M}_{t_{1}}^{2}\right] -\bar{\sigma}^{2}t_{2} \\
		={}&\bar{\mathbb{E}}_{t_{1}}\left[\left(\bar{M}_{t_{2}}-\bar{M}
		_{t_{1}}\right)^{2}\right]+\bar{M}_{t_{1}}^{2}-\bar{\sigma}^{2}t_{2} \\
		={}&\bar{\mathbb{E}}_{t_{1}}\left[\left\langle\bar{M}\right\rangle_{t_{2}}-\left\langle \bar{M}\right\rangle _{t_{1}}\right] +\bar{M}_{t_{1}}^{2}-\bar{\sigma}^{2}t_{2} \\
		={}&\hat{\mathbb{E}}_{f(t_{1})}\left[\int_{0}^{f(t_{2})}\frac{1}{f'(f^{-1}(r))}\mathrm{d}\left\langle\beta\right\rangle_{r}-\bar{\sigma}
		^{2}t_{2}\right]-\left\langle\bar{M}\right\rangle_{t_{1}}+\bar{M}_{t_{1}}^{2} \\
		={}&\hat{\mathbb{E}}_{f(t_{1})}\left[\int_{0}^{f(t_{2})}\frac{1}{f'(f^{-1}(r))}\mathrm{d}\left\langle\beta\right\rangle_{r}-\bar{\sigma}
		^{2}\int_{0}^{f(t_{2})}\frac{1}{f'(f^{-1}(r))}\mathrm{d}r\right]
		-\left\langle \bar{M}\right\rangle _{t_{1}}+\bar{M}_{t_{1}}^{2} \\
		={}&\int_{0}^{f(t_{1})}\frac{1}{f'(f^{-1}(r))}\mathrm{d}\left\langle
		\beta\right\rangle_{r}-\bar{\sigma}^{2}\int_{0}^{f(t_{1})}\frac{1}{f'(f^{-1}(r))}\mathrm{d}r-\left\langle\bar{M}\right\rangle_{t_{1}}+\bar{M}_{t_{1}}^{2} \\
		={}&\left\langle\bar{M}\right\rangle_{t_{1}}-\bar{\sigma}^{2}t_{1}-\left\langle \bar{M}\right\rangle _{t_{1}}+\bar{M}_{t_{1}}^{2} \\
		={}&\bar{M}_{t_{1}}^{2}-\bar{\sigma}^{2}t_{1}.
	\end{align*}
	Therefore, $\bar{M}_{t}-\bar{\sigma}^{2}t,\ t\geq 0$, is a martingale.
	Similarly, $-\bar{M}_{t}+\underline{\sigma }^{2}t,\ t\geq 0$, is also a
	martingale. From Theorem \ref{Levy}, $\bar{M}$ is a one-dimensional $G'$-Brownian motion.
\end{proof}

\begin{proposition}\label{BESQ-CIR}
	Let $z\geq0$ and let $d\in \mathbb{N}^{+}$. Let $Z$ be the $G$-$\mathrm{BESQ}_{z}^{d}$. Assume $a,b,c>0$ and $\frac{4b}{c^{2}}=d$. Define 
	\begin{equation*}
		X_{t}=e^{-at}Z_{\frac{c^{2}}{4a}\left( e^{at}-1\right) },\quad t\geq0 .
	\end{equation*}
	Then $X$ is a one-dimensional $G'$-CIR process. 
\end{proposition}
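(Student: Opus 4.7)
The plan is to identify $X$ with the solution of the $G'$-CIR equation by combining It\^{o}'s formula with the deterministic time change from Lemma \ref{time-change}. Set
\[ f(t)=\frac{c^{2}}{4a}\left(e^{at}-1\right),\qquad t\ge 0, \]
so that $f(0)=0$, $f\in C^{1}([0,\infty))$, and $f'(t)=\frac{c^{2}}{4}e^{at}$ is strictly positive and bounded on every $[0,n]$. Thus $f$ satisfies the hypotheses of Lemma \ref{time-change}, and
\[ \beta'_{t}:=\int_{0}^{t}\frac{1}{\sqrt{f'(s)}}\,\mathrm{d}\beta_{f(s)},\qquad t\ge 0, \]
is a one-dimensional $G'$-Brownian motion. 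In particular, in differential form we have $\mathrm{d}\beta_{f(t)}=\sqrt{f'(t)}\,\mathrm{d}\beta'_{t}$ and $\mathrm{d}\langle\beta\rangle_{f(t)}=f'(t)\,\mathrm{d}\langle\beta'\rangle_{t}$.

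Next, I apply It\^{o}'s formula to $X_{t}=e^{-at}Z_{f(t)}$, substituting the SDE for $Z$ from Proposition \ref{solution besq} evaluated at time $f(t)$:
\[ \mathrm{d}Z_{f(t)}=2\sqrt{Z_{f(t)}}\,\mathrm{d}\beta_{f(t)}+d\,\mathrm{d}\langle\beta\rangle_{f(t)}. \]
This yields
\[ \mathrm{d}X_{t}=-aX_{t}\,\mathrm{d}t+2e^{-at}\sqrt{Z_{f(t)}}\,\mathrm{d}\beta_{f(t)}+d\,e^{-at}\,\mathrm{d}\langle\beta\rangle_{f(t)}. \]
Using $\sqrt{Z_{f(t)}}=e^{at/2}\sqrt{X_{t}}$, $\sqrt{f'(t)}=\tfrac{c}{2}e^{at/2}$, and $e^{-at}f'(t)=\tfrac{c^{2}}{4}$, the martingale term simplifies to $c\sqrt{X_{t}}\,\mathrm{d}\beta'_{t}$ while the bracket term becomes $\tfrac{c^{2}d}{4}\,\mathrm{d}\langle\beta'\rangle_{t}=b\,\mathrm{d}\langle\beta'\rangle_{t}$, where I have invoked the hypothesis $\tfrac{4b}{c^{2}}=d$. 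Hence
\[ \mathrm{d}X_{t}=-aX_{t}\,\mathrm{d}t+b\,\mathrm{d}\langle\beta'\rangle_{t}+c\sqrt{X_{t}}\,\mathrm{d}\beta'_{t},\qquad X_{0}=z, \]
which is precisely the $G'$-CIR equation in the sense of \cite{A2023GCIR}.

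The main technical obstacle will be the rigorous time-change of the $G$-stochastic integral: I need to justify that writing $\mathrm{d}\beta_{f(t)}=\sqrt{f'(t)}\,\mathrm{d}\beta'_{t}$ is legitimate quasi-surely, i.e.\ that for an appropriate integrand $\eta$ one has $\int_{0}^{f(t)}\eta_{r}\,\mathrm{d}\beta_{r}=\int_{0}^{t}\eta_{f(s)}\sqrt{f'(s)}\,\mathrm{d}\beta'_{s}$, and analogously for the quadratic-variation integral; this is the same substitution that underpins the proof of Lemma \ref{time-change} and can be obtained by approximation with elementary processes, using that $f'$ is bounded on compacts. Once this is in hand, it remains to check that the coefficients $c\sqrt{X_{\cdot}}$ and $-aX_{\cdot}$ lie in the function spaces ($\tilde M_{G}^{2}(0,T)$ and $\tilde M_{G}^{1}(0,T)$) required by the definition of a $G'$-CIR solution, which follows from $Z\in\tilde M_{G}^{2}(0,T)$ given by Proposition \ref{solution besq} together with the deterministic, bounded time change $f$. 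The identification of $X$ as a $G'$-CIR process then follows directly from the uniqueness result for the $G'$-CIR equation in \cite{A2023GCIR}.
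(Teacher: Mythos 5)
Your proposal is correct and follows essentially the same route as the paper: the same time change $f(t)=\frac{c^{2}}{4a}(e^{at}-1)$, the same application of Lemma \ref{time-change} to produce the $G'$-Brownian motion $\int_{0}^{t}\frac{1}{\sqrt{f'(s)}}\,\mathrm{d}\beta_{f(s)}$, the same It\^{o} computation on $e^{-at}Z_{f(t)}$ using the SDE (\ref{besq}), and the same identification with the $G'$-CIR equation of \cite{A2023GCIR}. The only difference is that you spell out the substitution $\mathrm{d}\beta_{f(t)}=\sqrt{f'(t)}\,\mathrm{d}\beta'_{t}$ and the integrability checks as explicit remaining steps, which the paper treats as immediate consequences of $f'>0$ and the cited results.
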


\begin{proof}
	Set $f(t)=\frac{c^{2}}{4a}\left( e^{at}-1\right) $. Applying the Ito formula to $X_{t}$, we obtain that 
	\begin{align}\label{dXt}
		\mathrm{d}X_{t} ={}&\mathrm{d}e^{-at}Z_{f(t)}\notag \\
		={}&2e^{-at}\sqrt{Z_{f(t)}}\mathrm{d}\beta_{f(t)}+\frac{4b}{c^{2}}e^{-at}\mathrm{d}\left\langle\beta\right\rangle_{f(t)}-ae^{-at}Z_{f(t)}\mathrm{d}t\notag \\
		={}&-aX_{t}\mathrm{d}t+\frac{4b}{c^{2}}e^{-at}\mathrm{d}\left\langle\beta\right\rangle_{f(t)}+2e^{-at}\sqrt{Z_{f(t)}}\mathrm{d}\beta_{f(t)}.
	\end{align}
	Define 
	\begin{equation*}
		\bar{\beta}_{t}=\int_{0}^{t}\frac{1}{\sqrt{f'(s)}}\mathrm{d}\beta_{f(s)}.
	\end{equation*}
	According to Lemma \ref{time-change}, we konw $\bar{\beta}$ is a one-dimensional $G'$-Brownian motion. Since $ f'>0 $, it follows that   
	\begin{equation*}
		\mathrm{d}\beta _{f(t)}=\sqrt{f'(t)}\mathrm{d}\bar{\beta}_{t},\quad \mathrm{d}\left\langle\beta\right\rangle_{f(t)}=f'(t)\mathrm{d}\left\langle \bar{\beta}\right\rangle _{t}.
	\end{equation*}
	Then (\ref{dXt}) can be transformed into the following form
	\begin{align*}
		\mathrm{d}X_{t} ={}&-aX_{t}\mathrm{d}t+\frac{4b}{c^{2}}e^{-at}\mathrm{d}\left\langle \beta \right\rangle _{f(t)}+2e^{-at}\sqrt{Z_{f(t)}}\mathrm{d}\beta _{f(t)} \\
		={}&-aX_{t}\mathrm{d}t+b\mathrm{d}\left\langle\bar{\beta}\right\rangle_{t}+c\sqrt{X_{t}}\mathrm{d}\bar{\beta}_{t}.
	\end{align*}
    Additionally, $ X_{0}=z $. From equation (5.1) in \cite{A2023GCIR}, $X$ is a one-dimensional $G'$-CIR process.
\end{proof}

According to the above lemma, we can obtain the estimate of the one-dimensional $G'$-CIR process through the squared $G$-Bessel process.

\begin{proposition}
	Let $z\geq 0,d\in\mathbb{N}^{+}$ and let $a,b,c>0$. Let $X,Z$ defined as in Proposition \ref{BESQ-CIR}. Then
	\[  \hat{\mathbb{E}}[X_{t}]=e^{-at}z+\frac{b\bar{\sigma}^{2}}{a}(1-e^{-at}), \ \ \hat{\mathbb{E}}[-X_{t}]=-e^{-at}z-\frac{b\underline{\sigma}^{2}}{a}(1-e^{-at}).\]
\end{proposition}

\begin{proof}
	According to Proposition \ref{BESQ-CIR}, we know 
	\[\hat{\mathbb{E}}[X_{t}]=\hat{\mathbb{E}}[e^{-at}Z_{\frac{c^{2}}{4a}\left( e^{at}-1\right) }].\]
	Set $f(t)=\frac{c^{2}}{4a}\left( e^{at}-1\right) $. Thus, 
	\begin{align*}
		\hat{\mathbb{E}}[e^{-at}Z_{f(t)}]=& \hat{\mathbb{E}}\left[e^{-at}z+2e^{-at}\int_{0}^{f(t)}\sqrt{Z_{s}}\mathrm{d}\beta_{s}+de^{-at}\left\langle \beta \right\rangle _{f(t)}  \right] \\
		=&e^{-at}z+de^{-at}\hat{\mathbb{E}}[\left\langle \beta \right\rangle _{f(t)}].
	\end{align*}
	Due to Proposition 4.1.4 in \cite{peng2019nonlinear}, it follows that $\left\langle \beta \right\rangle _{f(t)}-\bar{\sigma}^{2}f(t), \ t\geq0, $ is a $G$-martingale, which implies
	\[ \hat{\mathbb{E}}[\left\langle \beta \right\rangle _{f(t)}]=\bar{\sigma}^{2}f(t). \]
	Therefore, 
	\begin{align*}
		\hat{\mathbb{E}}[X_{t}]=&e^{-at}z+de^{-at}\hat{\mathbb{E}}[\left\langle \beta \right\rangle _{f(t)}]\\
		=&e^{-at}z+de^{-at}\bar{\sigma}^{2}f(t)\\
		=&e^{-at}z+\frac{b\bar{\sigma}^{2}}{a}(1-e^{-at}).
	\end{align*}
	
	Similarly, we can obtain
	\[\hat{\mathbb{E}}[-X_{t}]=-e^{-at}z-\frac{b\underline{\sigma}^{2}}{a}(1-e^{-at}). \]
	The proof is complete.
\end{proof}

\begin{remark}
	It is worth noting that the conditions of the above proposition are different from those of the three cases in Proposition 5.5 in \cite{A2023GCIR}. Furthermore, the above proposition holds for all $d \geq 0$. However, when $d\notin\mathbb{N}^{+}$, $Z$ is no longer a squared $G$-Bessel process. When $d = b = 0$, the result of the above proposition is consistent with the first case of Proposition 5.5 in \cite{A2023GCIR}.
\end{remark}

\section*{Declarations}

\subsection*{Funding}
This work is supported by the National Natural Science Foundation of China (Grant No. 12326603, 11671231).
\subsection*{Ethical approval}
Not applicable.
\subsection*{Informed consent}
Not applicable.
\subsection*{Author Contributions}
All authors contributed equally to each part of this work. All authors read and approved the final manuscript.
\subsection*{Data Availability Statement}
Not applicable.
\subsection*{Conflict of Interest}
The authors declare that they have no known competing financial interests or personal relationships that could have appeared to influence the work reported in this paper.
\subsection*{Clinical Trial Number}
Not applicable.

\bigskip

\end{document}